\newcommand{\singlespacing}{\let\CS=\@currsize\renewcommand{\baselinestretch}{1}\tiny\CS}
\newcommand{\oneandahalfspacing}{\let\CS=\@currsize\renewcommand{\baselinestretch}{1.25}\tiny\CS}
\newcommand{\doublespacing}{\let\CS=\@currsize\renewcommand{\baselinestretch}{1.5}\tiny\CS}
\newtheorem{theorem}{Theorem}[section]
\newtheorem{definition}{Definition}[section]
\newtheorem{rule-def}[theorem]{Rule}
\newtheorem{example}{Example}[section]
\newtheorem{remark}{Remark}[section]
\newcommand{\bee}{\begin{eqnarray*}}
\newcommand{\eee}{\end{eqnarray*}}
\begin{document}
\title{Modus Ponens And Modus Tollens In Linguistic Truth-Valued Propositional Logic}
 \author{Bithi Chattaraj and Sumita Basu}
\newcommand{\Addresses}{{
  \bigskip
  \footnotesize

Bithi Chattaraj(Corresponding author), \textsc{Bethune College
181, Bidhan Sarani; Kolkata-700006.} \par \nopagebreak
Email: \texttt{bithichattaraj@gmail.com}

\medskip

Sumita Basu, \textsc{Bethune College;
181 Bidhan Sarani; Kolkata-700006} \par \nopagebreak
 Email: \texttt{sumi\underline{ }basu05@yahoo.co.in}

}}

\maketitle
\begin{abstract}
Truth values of Modus Ponens and Modus Tollens rules for propositions having linguistic truth value that may be represented by lattice(Fig1, Fig2) are computed in this paper. The results show that the truth values are not always absolutely true but have graded truth values in some cases.
\end{abstract}
\textbf{Keywords: }Linguistic Hedge set, Modus Ponens, Modus Tollens, Lattice Implication Algebra, Quasi Lattice Implication Algebra
\section{Introduction}
Natural language is not crisp but vague. Reasoning with a crisp language is dealt in classical logic where the truth value set($C$) is $\{ T,F \}$. Logicians faced difficulty to draw inference from the sentences of natural language with two valued logic. In 1920 Lukasiewicz \cite{L21}proposed the theory of three valued logic which was later generalized to multi-valued logic. L.A Zadeh in 1975 introduced linguistic variables \cite{Z1,Z2, Z3} to capture such vague concepts. As an example \textit{Age} is a linguistic variable which may have truth values \textit{very young, moderately young, moderately old, very old}. Zadeh represented the truth values of the linguistic variable by a fuzzy set and used fuzzy logic for reasoning with linguistic variables.  Fuzzy reasoning can be viewed as a fuzzy extension of multi-valued logic.\\
Construction of suitable fuzzy set for a typical linguistic variable is very difficult. This makes problem of reasoning with linguistic variable (fuzzy reasoning) all the more challenging.  Nguyen and  Wechler \cite{N6, N7} tried to give an algebraic structure to the linguistic truth values and applied the results to fuzzy logic. Some modifications of representation of linguistic variables and its application to fuzzy reasoning  have been suggested by Di Lascio et.al  \cite{L8}, Nguyen and  Huynh, \cite{N5}, V.N. Huynh \cite {H10}, M.E Cock and E.E. Kerre \cite{M19}.\\
 The truth values of propositions of languages in real world are not exactly defined but are tagged with linguistic hedges. So given a proposition $P$ instead of saying that  \textquoteleft \textit{the proposition is true}\textquoteright,   we very often say \textit{$P$ is absolutely true/ highly true/quite true/somewhat true/rather true/slightly true etc.} Similarly, \textquoteleft \textit{the proposition is false}\textquoteright,   is replaced by the  \textit{$P$ is absolutely false/ highly false/quite false/somewhat false/rather false/slightly false etc.} The linguistic hedge set ($H$) will be $\{absolutely, highly, quite, somewhat, rather, slightly \}$. So the set truth values ($V$) of propositions of natural language will be $V= H \times C$.\\  A mapping of elements of a set $A$ to $[0, 1] $ implies that there is a linear ordering of the elements of $A$. However, in real world the elements may be incomparable. So fuzzy set theory is not adequate to deal with such non-comparable informations. In fact Zadeh(1965) commented:\textit{ \textquoteleft In a more general setting, the range of the membership function can be taken to be a suitable partially ordered set P \textquoteright}.\\  A lattice consists of a set of elements which may be comparable or non comparable.\\
In this paper we make the following assumptions:
 \begin{enumerate}
\item The linguistic hedge set ($H$) is finite and totally ordered.
\item The linguistic truth valued set $V$  forms a lattice having the Hasse diagram given either by Fig 1 or Fig 2.
\end{enumerate}
If $V$ forms a lattice of the form Fig 1 then a unary operation (inverse operation)$  "\prime "$  and a  binary operation (implication operation)  $  " \rightarrow "$ on $V$ may be defined so that  $L=(V, \vee , \wedge ,\prime ,O,I,\rightarrow )$   is a lattice implication algebra. However, if $V$ is of the form Fig 2 then $L$  is a quasi lattice implication algebra.\\
A special class of multi-valued logic called lattice-valued logic have been discussed by Y.Xu et. al \cite{ Y16, Y20}. We have used such lattice-valued logic to compute truth values of Modus Ponens and Modus Tollens rules for propositions having the linguistic truth value $V$. The result showed that the truth values are not always absolutely true.  We could also show that Modus Ponens/ Modus Tollens will have graded truth values.\\
The paper is organized as follows: In section 2, the basic properties of Quasi Lattice Implication algebra(QLIA), Lattice Implication algebra(LIA), Linguistic Truth valued propositions (LTVP) , Quasi Linguistic Truth valued propositions (QLTVP)are discussed briefly. Truth values of Modus Ponens and Modus Tollens rules for LTVP are computed in section 3 while the same for QLTVP are computed in section 4. Some concluding remarks are included in section 5.
\section{Basic Concepts}
\begin{definition}

 Let $ <L,\wedge,\vee,\prime,O,I>   $ be a bounded lattice with universal boundaries $O$ (the least element) and $I$ (the greatest element) respectively, and $"\prime"$ be an order-reversing involution. For any $ x,y,z \in L $, if mapping $\rightarrow : L \times L\rightarrow L$ satisfies:
 \begin{enumerate}
\item $(I_{1})\ x \rightarrow (y \rightarrow z)=y \rightarrow (x \rightarrow z)$
\item $(I_{2})\ x \rightarrow x=I$
\item $(I_{3})\ x \rightarrow y=y ^{\prime} \rightarrow x ^{\prime}$
\item $(I_{4})\ x \rightarrow y=y \rightarrow x=I, then x=y$
\item $(I_{5})\ ( x \rightarrow y ) \rightarrow y=( y \rightarrow x ) \rightarrow x  $
\end{enumerate}
then $ <L,\wedge,\vee,\prime,O,I,\rightarrow >  $ is called a quasi-lattice implication algebra.
If it satisfies two additional properties as follows:
 \begin{enumerate}
\item $(I_{6})\ (x \vee y ) \rightarrow z=(x \rightarrow z ) \wedge (y \rightarrow z )$  
\item $(I_{7})\ (x \wedge y ) \rightarrow z=(x \rightarrow z ) \vee  (y \rightarrow z )$
\end{enumerate}
then $ <L,\wedge,\vee,\prime,O,I,\rightarrow >  $ is called a lattice implication algebra.
\end{definition}

In classical logic the truth value of a proposition is either true or false. However, in natural language the truth values of statements are not restricted to only true or false, rather they are accompanied by some linguistic hedges which reflect the degrees of  truth or falsity of statements. Very often we  refer to a statement as {\it somewhat true} or {\it slightly false}."somewhat", "slightly" are linguistic hedges.\\Let  $H= \lbrace h_0,h_1,….,h_n | n \geq 0 \rbrace$, be the linguistic hedge set, where $h_0=slightly,\ h_1=somewhat,\ h_2=rather,\ h_n=absolutely$.The linguistic hedge operator set $H$ is totally ordered.
\begin{theorem}
Let the hedge operator set $H=\lbrace h_0,h_1,….,h_n |n \geq 0 \rbrace$, be a chain such that for $j \leq k,h_j \leq h_k, h_j \vee h_k = h_{max(j,k)}$  and $h_j \wedge h_k = h_{min(j,k)}$.
Now, we define the unary operator $"\prime"$ as ${h_j}^{\prime} = h_{n-j}$ and the binary operator $" \rightarrow "$ as $h_j \rightarrow h_k = h_{min(n,n-j+k)}$. Then the set $<H, \wedge , \vee , \prime , \rightarrow >$ is a  LIA.
\end{theorem}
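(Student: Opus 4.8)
The plan is to verify, in order, the defining properties of a lattice implication algebra: first that $\langle H,\wedge,\vee,{}^{\prime},O,I,\rightarrow\rangle$ is a quasi-lattice implication algebra (axioms $(I_1)$--$(I_5)$ on a bounded lattice with an order-reversing involution), and then that the two extra identities $(I_6),(I_7)$ hold. The central device throughout will be to translate each statement about $\vee,\wedge,{}^{\prime},\rightarrow$ into an elementary arithmetic statement about subscripts, using $h_j\vee h_k=h_{\max(j,k)}$, $h_j\wedge h_k=h_{\min(j,k)}$, ${h_j}^{\prime}=h_{n-j}$ and $h_j\rightarrow h_k=h_{\min(n,\,n-j+k)}$. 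A useful reformulation I would record at the outset is $\min(n,\,n-j+k)=n-\max(0,\,j-k)$, so that $h_j\rightarrow h_k=h_{\,n-\max(0,j-k)}$; this makes the clamping at $n$ explicit, keeps the index arithmetic linear, and shows in passing that every subscript stays in $\{0,\dots,n\}$, so all four operations are well defined.

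First I would dispose of the lattice and involution parts. Since $H$ is a chain $h_0\le h_1\le\cdots\le h_n$, it is automatically a bounded lattice with least element $O=h_0$ and greatest element $I=h_n$. The map ${h_j}^{\prime}=h_{n-j}$ is order-reversing, because $j\le k$ gives $n-j\ge n-k$, and it is an involution since $({h_j}^{\prime})^{\prime}=h_{n-(n-j)}=h_j$.

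Next come the implication axioms, each reduced to subscript arithmetic. Axiom $(I_2)$ is immediate since $n-j+j=n$, giving $h_j\rightarrow h_j=h_n=I$. For $(I_3)$ I would compute both sides to the common value $h_{\min(n,\,n-j+k)}$. For $(I_4)$, $h_i\rightarrow h_j=I$ forces $n-i+j\ge n$, i.e. $i\le j$, and combining with the symmetric inequality from $h_j\rightarrow h_i=I$ yields $i=j$. Axiom $(I_5)$ I expect to fall out cleanly from the reformulation above: both $(x\rightarrow y)\rightarrow y$ and $(y\rightarrow x)\rightarrow x$ evaluate to $h_{\max(i,j)}=x\vee y$. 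Finally $(I_6)$ and $(I_7)$ reduce to the fact that the outer clamp $\min(n,\cdot)$ commutes with $\min$ and $\max$ of its arguments in a chain; concretely $\min(n,\,n-\max(i,j)+k)=\min(n,\,n-i+k)\wedge\min(n,\,n-j+k)$ for $(I_6)$, and the dual identity with $\min$ and $\wedge$ interchanged for $(I_7)$.

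The step I expect to be the main obstacle is the exchange axiom $(I_1)$, $x\rightarrow(y\rightarrow z)=y\rightarrow(x\rightarrow z)$, because it contains a nested implication and hence a nested clamp $\min(n,\,n-i+\min(n,\,n-j+k))$. Here the plan is to substitute the reformulation so that the left-hand side becomes $\min(n,\,2n-i-\max(0,\,j-k))$ and the right-hand side $\min(n,\,2n-j-\max(0,\,i-k))$, and then split into the four cases determined by the signs of $j-k$ and $i-k$. When $j\ge k$ and $i\ge k$ both arguments collapse to $2n-i-j+k$ by direct symmetry, so both sides equal $h_{\min(n,\,2n-i-j+k)}$; in each of the remaining three cases both arguments turn out to be at least $n$, so the outer clamp forces both sides to equal $h_n=I$. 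Verifying that these four cases exhaust the possibilities and agree on their overlaps is the only genuinely delicate bookkeeping in the argument.
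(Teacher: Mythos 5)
Your proposal is correct and follows essentially the same route as the paper: an axiom-by-axiom verification of $(I_1)$--$(I_7)$ by translating each identity into subscript arithmetic and splitting into cases on the relative order of the indices. Your reformulation $h_j\rightarrow h_k=h_{\,n-\max(0,\,j-k)}$ only streamlines that same case analysis (it also cleanly supplies the argument for $(I_4)$, which the paper merely asserts), so the two proofs are in substance identical.
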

\begin{proof} 
For the property $I_1:x \rightarrow (y \rightarrow z)=y \rightarrow (x \rightarrow z)$ \\
Let $x=h_i,y= h_j,z=h_k$\\
LHS = $h_{min(n,n-i+min(n,n-j+k))}$\\ 
RHS = $h_{min(n,n-j+min(n,n-i+k))}$\\ 
\begin{enumerate}
\item[i)]	$j \leq k$\\
For $i=k,i < k,i > k$\\
LHS = $h_n$ = RHS\\
\item[ii)]	$j > k$\\
For $i=k,i < k$\\
LHS = $h_n$ = RHS\\
For $i > k$\\
LHS = $h_{(min(n,2n-i-j+k))}= RHS$.\\
\end{enumerate}
For the property $I_2:x \rightarrow x=I$, our upper bound i.e. I is $h_n$ and from the definitions of implications given above we can say that the property $I_2$ is satisfied for all $h_i \in H,i\in\{0,1,…,n\}$.\\
For the property $I_3:x \rightarrow y = y^{\prime} \rightarrow x^{ \prime}$, 
Let $x=h_i,y= h_j$
LHS= $h_{(min(n,n-i+j))}$= RHS \\
For the property $I_4:x \rightarrow y=y \rightarrow x=I$, then $x=y$,
From the definition of implication given above, we can easily prove this.\\
For the property $I_5:(x \rightarrow y) \rightarrow y=(y \rightarrow x) \rightarrow x$,
	Let $x=h_i,y= h_j$\\
LHS= $h_{min(n,n-min(n,n-i+j)+j)}$ \\
RHS = $h_{min(n,n-min(n,n-j+i)+i)}$\\
\begin{enumerate}
\item[i)]If $i=j$;
LHS= $h_j,RHS=h_i$
\item[ii)]	If $i>j$;
LHS= $h_i$ =RHS
\item[iii)]	If $i<j$;
LHS= $h_j$ =RHS
\end{enumerate}
For the property $I_6:(x \vee y) \rightarrow z=(x \rightarrow z) \wedge (y \rightarrow z)$\\
Let $x=h_i,y= h_j,z=h_k$\\
LHS= $h_{min(n,n-max(i,j)+k)}$ \\
RHS = $h_{min(min(n,n-i+k),min(n,n-j+k) )}$\\ 
\begin{enumerate}
\item[i) ] $i>j$;
LHS=$h_{(min(n,n-i+k))}= RHS$
\item[ii)]$ i<j$;
LHS= $h_{(min(n,n-j+k))}$=RHS
\end{enumerate}
For the property $I_7:(x \wedge y) \rightarrow z=(x \rightarrow z) \vee (y \rightarrow z)$\\
Let $x=h_i,y= h_j,z=h_k$\\
LHS= $h_{min(n,n-min(i,j)+k)}$ \\
RHS = $h_{max(min(n,n-i+k),min(n,n-j+k) )}$ \\
\begin{enumerate}
\item[i)] $i>j$;
LHS= $h_{(min(n,n-j+k))}$= RHS
\item[ii)]$ i<j$;
LHS= $h_{(min(n,n-i+k))}$= RHS
\end{enumerate}
Thus, from the above properties we can see that the set $<H, \wedge , \vee ,\prime , \rightarrow >$ forms a LIA.
\end{proof}
	The basic truth value set is $C=\lbrace T,F \rbrace,$ where $T=true,\ F=false$. \\Let $V$ be the set of all linguistic truth values, i.e. $V=H \times C$. Thus, if $v$ is a linguistic truth value then, $, v\in V$ and $v=(h_i,c_j)$ where $h_i\in H,c_j\in C$ is composed of a linguistic hedge operator $h_i$ and a basic truth value $c_j$. 
If $V=\lbrace v_{00},v_{01},v_{10},v_{11},...,v_{n0},v_{n1} \rbrace$ then $v_{i0}=(h_i,c_0)=(h_i,F)$ and $ v_{i1}=(h_i,c_1)=(h_i,T)$. So if $h_i$ represents $"somewhat"$ then $v_{i0}$ represents $"somewhat$ $false"$ and $v_{i1}$ represents $"somewhat$ $true".$  \\
Let, $V_1=\{v_{i1}|i=0,1,2,...n\}$ and $V_0=\{v_{i0}|i=0,1,2,...n\}$ so that $V = V_0 \cup V_1$.Also $V_0, V_1$ satisfy the following:
  $$ v_{i1}, v_{j1} \in V_1 ,i \leq j \Rightarrow v_{i1} \leq v_{j1}$$ and,$$ v_{i0}, v_{j0} \in V_0 ,i \leq j \Rightarrow v_{j1} \leq v_{i1}$$
So,both  $V_0$ and $V_1$ are totally ordered and may be represented by chain.\\
Also the elements of $V$ have the following order
$$ k \in\{0,1,...n\}, v_{k0} \leq v_{(n-k)1}$$
Hence $V$ is a partially ordered set and may be represented by the following Hasse diagram.
\begin{figure}[h]
\centering
\begin{minipage}[b]{2in}
\hfill
  \includegraphics[width=2in, height=3.0in]{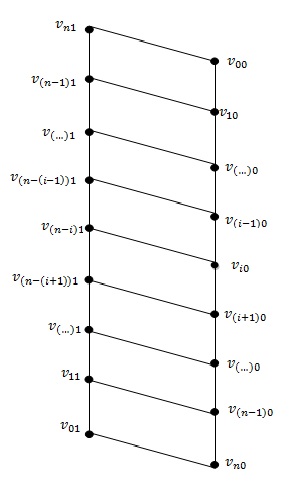}
\caption{Hasse Diagram of Linguistic Truth Value set V}
\label{fig:fig2}
\end{minipage}
 \end{figure}
\begin{theorem}
Let $V$ be the linguistic truth value set, then $(V,\leq)$ is a poset.  If $\mathscr{V}= (V,\vee , \wedge, O,I)$ where $O= v_{n0}, I= v_{n1}$ and the operation $"\vee "$ and $"\wedge "$ are defined as 
$\forall i, j \in \{0,1,..n\}, i\leq j $
\begin{enumerate}
\item $v_{i1} \vee v_{j1} = v_{j1}$
\item$ v_{i0} \vee v_{j0} = v_{i0}$
\item $v_{i1} \vee v_{j0} = v_ {i1},~ if~ n\leq (i+j)$
\item $v_{i1} \vee v_{j0} = v_ {(n-j)1},~ if ~n\geq (i+j)$
\item $v_{i1} \wedge v_{j1} = v_{i1}$
\item $v_{i0} \wedge v_{j0} = v_{j0}$
\item $v_{i1} \wedge v_{j0} = v_ {j0},~ if~ n\leq (i+j)$
\item $v_{i1} \wedge v_{j0} = v_ {(n-i)0},~ if ~n\geq (i+j)$
\end{enumerate}
then $\mathscr{V}$ forms a lattice.
\end{theorem}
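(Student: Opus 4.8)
The plan is to exploit the fact, already granted in the statement, that $(V,\le)$ is a poset. A poset in which every two-element subset has both a least upper bound and a greatest lower bound is precisely a lattice, so it suffices to show that the eight formulas defining $\vee$ and $\wedge$ produce, respectively, the supremum and the infimum of each pair of elements with respect to $\le$. First I would record the comparability relation on $V$ explicitly in the form it will be used: inside $V_1$ one has $v_{i1}\le v_{j1}\iff i\le j$; inside $V_0$ one has $v_{i0}\le v_{j0}\iff j\le i$; across the two chains, transitivity applied to $v_{k0}\le v_{(n-k)1}$ together with the chain orders yields $v_{i0}\le v_{j1}\iff i+j\ge n$, and no element of $V_1$ lies below any element of $V_0$. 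These three rules, read off from the Hasse diagram, determine the order completely.

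For the join, cases (1) and (2) are immediate: both arguments lie in a single chain, and the stated value is simply the larger of the two. The substantive work is in the mixed cases (3) and (4), where one argument comes from $V_1$ and the other from $V_0$. In each I would argue in two steps: first that the proposed value is an upper bound of both arguments, using the comparability rules above, and then that it is the least such. For (3), when $i+j\ge n$ the rule gives $v_{j0}\le v_{i1}$, so the two elements are comparable and their join is $v_{i1}$. For (4), when $i+j\le n$ the element $v_{(n-j)1}$ dominates both; to see it is the supremum I would note that an upper bound in $V_1$ must be $v_{m1}$ with $m\ge i$ and $m\ge n-j$, whence $m\ge n-j$ and $v_{(n-j)1}$ is the smallest admissible choice, while $V_0$ contributes no upper bound because no $V_1$ element sits below a $V_0$ element.

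The meet is the order-theoretic dual of the join: interchanging the roles of $V_0$ and $V_1$ and reversing every inequality turns cases (5)--(8) into mirror images of (1)--(4), so the same two-step verification applies. In particular the only delicate case, (8) with $i+j\le n$, is handled by showing that the greatest lower bound in $V_0$ is $v_{(n-i)0}$ and that $V_1$ supplies no lower bound. Once every pair is shown to have a supremum and an infimum, $\mathscr{V}$ is a lattice, with $O=v_{n0}$ as least and $I=v_{n1}$ as greatest element, as claimed.

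The main obstacle, and the place I would spend the most care, is exactly the two incomparable mixed cases (4) and (8): here the join and meet are genuinely new elements rather than one of the two inputs, so minimality (resp. maximality) of the candidate among all bounds must be checked rather than assumed. I would also verify the consistency of the piecewise definition at the seam $i+j=n$, where cases (3) and (4) must agree---and they do, since $v_{i1}=v_{(n-j)1}$ when $i=n-j$---and likewise (7) and (8); confirming that the formulas are well defined and single-valued at this boundary is the sort of routine-but-error-prone check on which the correctness of the whole argument hinges.
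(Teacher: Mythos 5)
Your proposal is correct, but it takes a genuinely different route from the paper's proof. You use the order-theoretic characterization of a lattice: taking the partial order on $V$ as given (the two chains plus the cross relations $v_{k0}\le v_{(n-k)1}$, whose transitive closure yields $v_{i0}\le v_{j1}$ iff $i+j\ge n$, and no element of $V_1$ below any element of $V_0$), you check that the eight formulas return exactly the supremum and infimum of each pair, the only substantive work being the mixed cases (3)--(4) and (7)--(8), where the candidate $v_{(n-j)1}$ (resp.\ $v_{(n-i)0}$) must be shown least (resp.\ greatest) among all bounds and the two branches must agree at the seam $i+j=n$; your computation of these bounds ($\max(i,n-j)$ for the join index, $\max(n-i,j)$ for the meet index) is right. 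The paper instead treats $\vee$ and $\wedge$ purely equationally and verifies the algebraic lattice axioms --- idempotence, commutativity, associativity and absorption --- by an exhaustive case analysis over the sign patterns of the arguments (eight cases for associativity alone). The two characterizations are equivalent, so either suffices. Your route is shorter and more conceptual, avoids the long associativity check, and makes explicit that the defined operations agree with the order of the Hasse diagram, so that $O=v_{n0}$ and $I=v_{n1}$ really are the universal bounds; the paper's route has the advantage of being a self-contained equational verification that never needs the full cross-chain comparability relation. If you write yours up, do include the explicit justification of the key order fact $v_{i0}\le v_{j1}\iff i+j\ge n$ (choose $k$ with $n-j\le k\le i$ and chain $v_{i0}\le v_{k0}\le v_{(n-k)1}\le v_{j1}$), since every subsequent step hangs on that description of the order.
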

\begin{proof}
In the properties $1$, $2$, $5$ and $6$ if we put $i=j$ then we see that the idempotent law is satisfied.\\
From the above definitions of $\vee$ and $\wedge$ we see that the commutative law is also satisfied.\\
Now we see whether associative law holds.\\
\begin{enumerate}
\item[a)]Let $x=v_{i1}, y= v_{j1}, z= v_{k1}, \forall i,j,k \in \{0,1,2,..n\}$\\
$v_{i1}\wedge (v_{j1}\wedge v_{k1})= v_{i1}= (v_{i1}\wedge v_{j1}) \wedge v_{k1}$ if $i\leq j\leq k$.\\
$v_{i1}\vee (v_{j1}\vee v_{k1})= v_{k1}= (v_{i1}\vee v_{j1}) \vee v_{k1}$ if $i\leq j\leq k$.\\

\item[b)]Let $x=v_{i1}, y= v_{j1}, z= v_{k0}, \forall i,j,k \in \{0,1,2,..n\}$\\
$
v_{i1}\wedge (v_{j1}\wedge v_{k0})=
\begin{cases}
v_{k0} &\text{if}\  n\leq (i+k)\\ 
v_{(n-i)0} &\text{if} \ n\geq (i+k)\\
\end{cases}
$
$=(v_{i1}\wedge v_{j1}) \wedge v_{k0}$ if $i\leq j\leq k$\\
$
v_{i1}\vee (v_{j1}\vee v_{k0})=
\begin{cases}
v_{j1} &\text{if}\  n\leq (j+k)\\ 
v_{(n-k)1} &\text{if} \ n\geq (j+k)\\
\end{cases}
$
$=(v_{i1}\vee v_{j1}) \vee v_{k0}$ if $i\leq j\leq k$

\item[c)]Let $x=v_{i1}, y= v_{j0}, z= v_{k1}, \forall i,j,k \in \{0,1,2,..n\}$\\
$
v_{i1}\wedge (v_{j0}\wedge v_{k1})=
\begin{cases}
v_{j0} &\text{if}\  n\leq (i+j)\\ 
v_{(n-i)0} &\text{if} \ n\geq (i+j)\\
\end{cases}
$
$=(v_{i1}\wedge v_{j0}) \wedge v_{k1}$ if $i\leq j\leq k$\\
$
v_{i1}\vee (v_{j0}\vee v_{k1})=
\begin{cases}
v_{k1} &\text{if}\  n\geq (i+j); \ n\leq (j+k)\\ 
v_{(n-j)1} &\text{if} \ n\geq (j+k); \ n\geq (j+k)\\
\end{cases}
$
$=(v_{i1}\vee v_{j0}) \vee v_{k1}$ if $i\leq j\leq k$

\item[d)]Let $x=v_{i1}, y= v_{j0}, z= v_{k0}, \forall i,j,k \in \{0,1,2,..n\}$\\
$
v_{i1}\wedge (v_{j0}\wedge v_{k0})=
\begin{cases}
v_{k0} &\text{if}\  n\leq (i+j)\\ 
v_{(n-i)0} &\text{if} \ n\geq (i+k)\\
\end{cases}
$
$=(v_{i1}\wedge v_{j0}) \wedge v_{k0}$ if $i\leq j\leq k$\\
$
v_{i1}\vee (v_{j0}\vee v_{k0})=
\begin{cases}
v_{i1} &\text{if}\  n\leq (i+j)\\ 
v_{(n-j)1} &\text{if} \ n\geq (i+j)\\
\end{cases}
$
$=(v_{i1}\vee v_{j0}) \vee v_{k0}$ if $i\leq j\leq k$

\item[e)]Let $x=v_{i0}, y= v_{j1}, z= v_{k1}, \forall i,j,k \in \{0,1,2,..n\}$\\
$
v_{i0}\wedge (v_{j1}\wedge v_{k1})=
\begin{cases}
v_{i0} &\text{if}\  n\leq (i+j)\\ 
v_{(n-j)0} &\text{if} \ n\geq (i+j)\\
\end{cases}
$
$=(v_{i0}\wedge v_{j1}) \wedge v_{k1}$ if $i\leq j\leq k$\\
$
v_{i0}\vee (v_{j1}\vee v_{k1})=
\begin{cases}
v_{k1} &\text{if}\  n\leq (i+j)\\ 
v_{(n-i)1} &\text{if} \ n\geq (i+k)\\
\end{cases}
$
$=(v_{i0}\vee v_{j1}) \vee v_{k1}$ if $i\leq j\leq k$

\item[f)]Let $x=v_{i0}, y= v_{j1}, z= v_{k0}, \forall i,j,k \in \{0,1,2,..n\}$\\
$
v_{i0}\wedge (v_{j1}\wedge v_{k0})=
\begin{cases}
v_{k0} &\text{if}\ n\geq (i+j);\  n\leq (j+k)\\ 
v_{(n-j)0} &\text{if} \ n\geq (i+j);\ n\geq (j+k)\\
\end{cases}
$
$=(v_{i0}\wedge v_{j1}) \wedge v_{k0}$ if $i\leq j\leq k$\\
$
v_{i0}\vee (v_{j1}\vee v_{k0})=
\begin{cases}
v_{j1} &\text{if}\  n\leq (i+j);\ n\leq (j+k)\\ 
v_{(n-i)1} &\text{if} \ n\geq (i+j);\ n\leq (j+k)\\
\end{cases}
$
$=(v_{i0}\vee v_{j1}) \vee v_{k0}$ if $i\leq j\leq k$

\item[g)]Let $x=v_{i0}, y= v_{j0}, z= v_{k1}, \forall i,j,k \in \{0,1,2,..n\}$\\
$
v_{i0}\wedge (v_{j0}\wedge v_{k1})=
\begin{cases}
v_{j0} &\text{if}\ n\leq (j+k)\\ 
v_{(n-k)0} &\text{if} \ n\geq (i+k);\ n\geq (j+k)\\
\end{cases}
$
$=(v_{i0}\wedge v_{j0}) \wedge v_{k1}$ if $i\leq j\leq k$\\
$
v_{i0}\vee (v_{j0}\vee v_{k1})=
\begin{cases}
v_{k1} &\text{if}\  n\leq (i+k);\ n\leq (j+k)\\ 
v_{(n-i)1} &\text{if} \ n\geq (i+k);\ n\leq (j+k)\\
\end{cases}
$
$=(v_{i0}\vee v_{j0}) \vee v_{k1}$ if $i\leq j\leq k$

\item[h)]Let $x=v_{i0}, y= v_{j0}, z= v_{k0}, \forall i,j,k \in \{0,1,2,..n\}$\\
$
v_{i0}\wedge (v_{j0}\wedge v_{k0})= v_{k0} =(v_{i0}\wedge v_{j0}) \wedge v_{k0}$ if $i\leq j\leq k$\\
$
v_{i0}\vee (v_{j0}\vee v_{k0})= v_{i0} =(v_{i0}\vee v_{j0}) \vee v_{k0}$ if $i\leq j\leq k$
\end{enumerate}
Therefore, we see that the associative law is also satisfied.\\
Now, we check for the absorption law.

\begin{enumerate}
\item[a)]Let $x=v_{i1}, y= v_{j1}, \forall i,j \in \{0,1,2,..n\}$\\
$v_{i1} \wedge (v_{i1} \vee v_{j1})= v_{i1}= v_{i1} \vee (v_{i1} \wedge v_{j1})$

\item[b)]Let $x=v_{i1}, y= v_{j0}, \forall i,j \in \{0,1,2,..n\}$\\
$v_{i1} \wedge (v_{i1} \vee v_{j0})= v_{i1} = v_{i1} \vee (v_{i1} \wedge v_{j0})$

\item[c)]Let $x=v_{i0}, y= v_{j1}, \forall i,j \in \{0,1,2,..n\}$\\
$v_{i0} \wedge (v_{i0} \vee v_{j1})= v_{i0} = v_{i0} \vee (v_{i0} \wedge v_{j1})$

\item[d)]Let $x=v_{i0}, y= v_{j0}, \forall i,j \in \{0,1,2,..n\}$\\
$v_{i0} \wedge (v_{i0} \vee v_{j0})= v_{i0} = v_{i0} \vee (v_{i0} \wedge v_{j0})$.
\end{enumerate}
Thus, the absorption law is also satisfied.\\
Therefore, $\mathscr{V}$ forms a lattice.
\end{proof}
\begin{definition}
Let $\mathscr{V}= (V,\vee , \wedge, O,I)$ be a lattice as defined in the previous theorem. We define $\forall i, j \in \{0,1\}$,\\ A unary operation (inverse operation)$  "\prime "$  on $V$ as $$((h_i,T)^ {\prime} =(h_i,F)) \wedge ((h_i,F)^ {\prime} =(h_i,T)) \Rightarrow  v_{ij}^{\prime}= v_{i(1-j)}$$ A binary operation (implication operation)  $  " \rightarrow "$ is also defined on $V$ as follows:
\begin{enumerate}
\item $((h_i,T) \rightarrow (h_j,F)=(h_{max(0,i+j-n)} ,F)) \Rightarrow( (v_{i1} \rightarrow v_{j0}) = v_{max(0,i+j-n)0})$
\item $((h_i,F) \rightarrow (h_j,T)=(h_{min(n,i+j)} ,T))\Rightarrow( (v_{i0} \rightarrow v_{j1}) = v_{min(n,i+j)1})$
\item $(h_i,T) \rightarrow (h_j,T)=(h_{(min(n,n-i+j))},T)\Rightarrow( (v_{i1} \rightarrow v_{j1}) = v_{min(n,n-i+j)1})$
\item$(h_i,F) \rightarrow (h_j,F)=(h_{(min(n,n-j+i))},T)\Rightarrow( (v_{i0} \rightarrow v_{j0}) = v_{min(n, n-j+i)1})$
\end{enumerate}
\end{definition}
\begin{theorem}
If $L=(V, \vee , \wedge ,\prime ,O,I,\rightarrow )$ then $L$  is a lattice implication algebra.
\end{theorem}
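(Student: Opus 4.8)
\section*{Proof proposal}

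The lattice structure of $\mathscr{V}$ together with its bounds $O=v_{n0}$ and $I=v_{n1}$ is already available from the preceding theorem, and $v_{ij}^{\prime}=v_{i(1-j)}$ is visibly an involution, so what remains is to check that ${}^{\prime}$ is order-reversing and that $\rightarrow$ satisfies $(I_{1})$--$(I_{7})$. Rather than expand each axiom into the eight type-combinations of its variables, my plan is to exhibit $V$ as a subalgebra of a product of two \emph{already known} lattice implication algebras: the two-element classical algebra $C=\{F<T\}$ (which is the LIA of Theorem 2.1 in the case $n=1$) and the hedge algebra $H$ of Theorem 2.1. The guiding intuition is that an element $v_{ij}$ carries two essentially independent data — its polarity $c_{j}\in C$ and its grade in $H$ — and all four operations act on these two coordinates separately.

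Concretely I would introduce two maps. The \emph{polarity} map $\tau\colon V\to C$ sends $\tau(v_{i1})=T$ and $\tau(v_{i0})=F$. The \emph{grade} map $\phi\colon V\to H$ sends $\phi(v_{i1})=h_{i}$ and $\phi(v_{i0})=h_{n-i}$, the reversal $i\mapsto n-i$ on the false part being exactly what aligns the order of $V_{0}$ with that of $H$. I then set $\Psi=(\tau,\phi)\colon V\to C\times H$. The central claim is that $\Psi$ is an injective homomorphism for all of $\vee,\wedge,{}^{\prime},\rightarrow$ and carries $O,I$ to the bottom and top of $C\times H$. Injectivity is immediate, since from $\tau$ one recovers the polarity and from $\phi$ the index ($\phi$ being injective on each of $V_{0},V_{1}$). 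Preservation of ${}^{\prime}$ is clear coordinatewise, and preservation of $\rightarrow$ reduces, clause by clause against the definition of $\rightarrow$ on $V$, to the implication identity of $H$; for instance $\phi(v_{i1}\rightarrow v_{j0})=h_{n-\max(0,i+j-n)}=h_{\min(n,2n-i-j)}=h_{i}\rightarrow h_{n-j}=\phi(v_{i1})\rightarrow\phi(v_{j0})$, while $\tau$ sends the same implication to $T\rightarrow F=F$. Preservation of $\vee$ and $\wedge$ is checked in the same manner against the eight clauses of the preceding theorem.

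Granting that $\Psi$ is such an embedding, the conclusion is purely formal. Each of $(I_{1}),(I_{2}),(I_{3}),(I_{5}),(I_{6}),(I_{7})$ is an identity, hence inherited by any product and any subalgebra of lattice implication algebras; the single quasi-identity $(I_{4})$ holds in $C\times H$ coordinatewise and is inherited by $\Psi(V)$ because $\Psi$ carries the top of $V$ to the genuine top $(T,h_{n})$ of $C\times H$. Likewise ${}^{\prime}$ is order-reversing on $V$, since an injective lattice homomorphism is an order-embedding and the involution of $C\times H$ is order-reversing in each factor. As $C$ and $H$ are lattice implication algebras by Theorem 2.1, so is $C\times H$, and therefore its subalgebra $\Psi(V)\cong V$ is a lattice implication algebra; that is, $L=(V,\vee,\wedge,{}^{\prime},O,I,\rightarrow)$ is a LIA.

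The one genuinely computational point, and the main obstacle, is verifying that $\phi$ respects $\rightarrow$, $\vee$ and $\wedge$ exactly. The delicacy lies entirely in the truncations: the clamps $\min(n,\cdot)$ and $\max(0,\cdot)$ in the definition of $\rightarrow$, the boundary split between the two lines of each join/meet clause (governed by the sign of $i+j-n$), and the reindexing $i\mapsto n-i$ that $\phi$ performs on false-type elements must all be shown to collapse onto the single clamp $\min(n,\cdot)$ built into $H$. These are elementary integer identities, such as the $n-\max(0,i+j-n)=\min(n,2n-i-j)$ used above, but they are precisely where an oversight could hide, so I would verify each of the four implication clauses and each of the eight lattice clauses explicitly before invoking the abstract transfer argument.
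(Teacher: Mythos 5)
Your proposal is correct, but it takes a genuinely different route from the paper. The paper proves the theorem by brute force: it verifies each of $I_{1}$--$I_{7}$ directly from Definition 2.2, splitting every axiom into all sign-pattern cases (eight combinations of $v_{i1}/v_{i0}$ for the ternary axioms, with further subcases on $i+j\lessgtr n$ etc.). You instead exhibit $V$ inside $C\times H$ via $\Psi=(\tau,\phi)$ with $\phi(v_{i1})=h_{i}$, $\phi(v_{i0})=h_{n-i}$, and transfer the axioms abstractly. I checked the coordinate computations you leave sketched, and they do go through: all four implication clauses and all eight join/meet clauses of Theorem 2.2 are reproduced coordinatewise in $C\times H$ (e.g.\ $v_{i1}\wedge v_{j0}$ maps to $(F,h_{\min(i,\,n-j)})$, which is $(F,h_{n-j})$ when $i+j\geq n$ and $(F,h_{i})$ when $i+j\leq n$, matching $v_{j0}$ and $v_{(n-i)0}$ respectively), the involution and the bounds are preserved, and since $|V|=2(n+1)=|C\times H|$ your injection is in fact an isomorphism, so the subalgebra step is not even needed. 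What your approach buys: it reduces the whole theorem to Theorem 2.1 (applied to $H$, and to $C$ as the case $n=1$) plus twelve short coordinate identities, it explains conceptually why the operation tables of Definition 2.2 look the way they do (polarity and grade act independently, with the grade reversed on the false chain), and the transfer of $I_{1}$--$I_{3}$, $I_{5}$--$I_{7}$ as identities and of $I_{4}$ as a quasi-identity preserved by products (and by subalgebras containing the top) is routine universal algebra --- though you should state and prove that product-closure fact explicitly, since the paper never does. What the paper's approach buys is self-containedness: it needs no auxiliary closure lemma and no auxiliary two-element algebra, at the cost of a long and error-prone case analysis. To turn your sketch into a complete proof you must actually write out the remaining eleven coordinate verifications you defer, since, as you yourself note, the truncation identities such as $n-\max(0,i+j-n)=\min(n,2n-i-j)$ are exactly where a slip could hide.
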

\begin{proof} 
$(V, \vee , \wedge ,\prime ,O,I,\rightarrow )$ forms a bounded lattice due to Theorem 2.2.\\$"\prime"$ is an order reversing involution as $$ \forall i\in \{1,2,..n\}, \forall j \in \{0, 1\},  ((v_{ij})^\prime)^\prime = (v_{i(1-j)})^\prime=v_{ij}$$
For the property $I_1:x \rightarrow (y \rightarrow z)=y \rightarrow (x \rightarrow z)$\\
\begin{enumerate}
\item[a)]	Let $x=v_{i1}, y= v_{j1}, z= v_{k1}, \forall i,j,k \in \{0,1,2,..n\}$\\
LHS = $v_{min(n,2n-i-j+k)1}$=RHS if $i,j >k$\\
LHS=$v_{n1}$= RHS if $i,j \leq k$; $j \leq k ,\ i>k$ and $j>k ,\ i \leq k$.

\item[b)]	Let $x=v_{i1}, y= v_{j1}, z= v_{k0}, \forall i,j,k \in \{0,1,2,..n\}$\\
LHS= $v_{(max(0,i+j+k-2n))0}$= RHS if $i,j>(n-k)$\\
LHS= $v_{00}$= RHS if $i,j \leq (n-k)$; $j \leq (n-k),\ i> (n-k)$ and $j> (n-k),\ i \leq (n-k)$.

\item[c)]	Let $x=v_{i1}, y= v_{j0}, z= v_{k1}, \forall i,j,k \in \{0,1,2,..n\}$\\
LHS= $v_{(min(n,n-i+j+k))1}$= RHS if $j \leq (n-k),\ i>k$\\
LHS= $v_{n1}$= RHS if $i,j \geq (n-k)$; $j \geq (n-k),\ i< k$ and $j<(n-k),\ i \leq k$.

\item[d)] Let $x=v_{i1}, y= v_{j0}, z= v_{k0}, \forall i,j,k \in \{0,1,2,..n\}$\\
LHS= $v_{(min(n,2n-i+j-k))1}$ = RHS if $i> (n-k), j<k$\\
LHS= $v_{n1}$ if $i \leq (n-k),\ j \geq k$; $i> (n-k), j \geq k$ and $i \leq (n-k),\ j<k$.

\item[e)]Let $x=v_{i0}, y= v_{j1}, z= v_{k1}, \forall i,j,k \in \{0,1,2,..n\}$\\
LHS= $v_{(min(n,n+i-j+k))1}$ = RHS if $i< (n-k),j>k$\\
LHS= $v_{n1}$=RHS if $i \geq (n-k),j \leq k$; $i<(n-k),\ j \leq k$ and $i \geq (n-k),\ j>k$.

\item[f)]Let $x=v_{i0}, y= v_{j1}, z= v_{k0}, \forall i,j,k \in \{0,1,2,..n\}$\\
LHS =$v_{(min(n,2n+i-j-k))1}$=RHS if $i<k,j>(n-k)$\\
LHS= $v_{n1}$ if $i \geq k,j \leq (n-k)$; $i<k,\ j \leq (n-k)$ and $i \geq k,\ j>(n-k)$.

\item[g)]Let $x=v_{i0}, y= v_{j0}, z= v_{k1}, \forall i,j,k \in \{0,1,2,..n\}$\\
LHS = $v_{(min(n,i+j+k))1}$= RHS if $i,j<(n-k)$\\
LHS = $v_{n1}$= RHS if $i,j \geq (n-k)$; $i<(n-k),\ j \geq (n-k)$ and $i \geq (n-k),\ j<(n-k)$.

\item[h)]Let $x=v_{i0}, y= v_{j0}, z= v_{k0}, \forall i,j,k \in \{0,1,2,..n\}$\\
LHS = $v_{(min(n,n+i+j-k))1}$= RHS if $i,j<k$\\
LHS = $v_{n1}$ if $i,j \geq k$; $i<k,\ j \geq k$ and $i \geq k,\ j<k$.
\end{enumerate}

For the property $I_2:x \rightarrow x = I$, either $x=v_{i0}~ or ~v_{j1}$. In either case  $x \rightarrow x = v_{n1}= I$.\\\\
Property $I_3$ is satisfied as we have the following:
$$x=v_{i0}, y=v_{j0} \Rightarrow (x\rightarrow y = v_{min(n,n-j+i)1}=y^\prime\rightarrow x^\prime)$$
$$x=v_{i0}, y=v_{j1} \Rightarrow (x\rightarrow y = v_{min(n,j+i)1}=y^\prime\rightarrow x^\prime)$$
$$x=v_{i1}, y=v_{j1} \Rightarrow (x\rightarrow y = v_{min(n,n-i+j)1}=y^\prime\rightarrow x^\prime)$$
$$x=v_{i1}, y=v_{j0} \Rightarrow (x\rightarrow y = v_{max(0,j+i-n)0}=y^\prime\rightarrow x^\prime)$$
Property $I_4$ is satisfied as we have the following:
$$x=v_{i0}, y=v_{j0},((v_{i0}\rightarrow v_{j0})=(v_{j0}\rightarrow v_{i0})=v_{n1} )\Rightarrow ( v_{min(n,n-j+i)1} =v_{min(n,n-i+j)1}= v_n) $$
$$\Rightarrow( (n \leq (n-i+j))\wedge (n \leq (n-j+i)) \Rightarrow (i=j))$$
$$x=v_{i1}, y=v_{j1},( (x\rightarrow y = v_{min(n,n-i+j)1}=(y\rightarrow x)=  v_{min(n,n-i+j)1}=v_{n1})\Rightarrow (i=j)$$
If $x=v_{i1}, y=v_{j0}$ then $ (x\rightarrow y = v_{max(0,j+i-n)0} \neq v_{n1})$, hence the required conditions are not satisfied.\\\\
Property $I_5$,\\
Case1\\
If, $x=v_{i1}, y=v_{j1}$ then, $$((x\rightarrow y)\rightarrow y)=v_{min(n,n-min(n,n-i+j)+j)1}~ and ~((y\rightarrow x)\rightarrow x)=v_{min(n,n-min(n,n-j+i)+i)1}$$So, for $$j \leq i,~((x\rightarrow y)\rightarrow y)=v_{i1} = ((y\rightarrow x)\rightarrow x)$$ and for $$i < j,~((x\rightarrow y)\rightarrow y)=v_{j1} = ((y\rightarrow x)\rightarrow x)$$ 
Case 2\\
Similarly, if $x=v_{i0}, y=v_{j0}$, $$j \leq i,~((x\rightarrow y)\rightarrow y)=v_{j0} = ((y\rightarrow x)\rightarrow x)$$ and for $$i < j,~((x\rightarrow y)\rightarrow y)=v_{i0} = ((y\rightarrow x)\rightarrow x)$$ 
Case 3\\
If, $x=v_{i1}, y=v_{j0}$,\\If $ n \leq (i+j)$,$$(v_{i1}\rightarrow v_{j0})\rightarrow v_{j0} = v_{i1} = (v_{j0}\rightarrow v_{i1})\rightarrow v_{i1}$$
If $ (i+j) \leq n$,$$(v_{i1}\rightarrow v_{j0})\rightarrow v_{j0} = v_{(n-j)1} = (v_{j0}\rightarrow v_{i1})\rightarrow v_{i1}$$
Case 4\\
If, $x=v_{i0}, y=v_{j1}$,\\If $ n \leq (i+j)$,$$(v_{i0}\rightarrow v_{j1})\rightarrow v_{j1} = v_{j1} = (v_{j1}\rightarrow v_{i0})\rightarrow v_{i0}$$
If $ (i+j) \leq n$,$$(v_{i0}\rightarrow v_{j1})\rightarrow v_{j1} = v_{(n-i)1} = (v_{j1}\rightarrow v_{i0})\rightarrow v_{i0}$$.
Now, let us check the properties $I_6: (x\vee y)\rightarrow z= (x\rightarrow z)\wedge (y\rightarrow z)$ and $I_7: (x\wedge y)\rightarrow z= (x\rightarrow z)\vee (y\rightarrow z)$.\\
\begin{enumerate}
\item[a)]	Let $x=v_{i1}, y= v_{j1}, z= v_{k1}, \forall i,j,k \in \{0,1,2,..n\}$\\
For property $I_6:$,LHS = $v_{min(n,n-i+k)1}$ = RHS, if $i \geq j$\\ 
LHS = $v_{min(n,n-j+k)1}$ = RHS, if $j>i$.\\
 
For property $I_7:$,LHS = $v_{min(n,n-j+k)1}$ = RHS, if $i \geq j$\\
LHS = $v_{min(n,n-i+k)1}$ = RHS, if $j>i$.

\item[b)]	Let $x=v_{i1}, y= v_{j1}, z= v_{k0}, \forall i,j,k \in \{0,1,2,..n\}$\\
For property $I_6$, LHS = $v_{max(0,i+k-n)0}$ = RHS, if $i \geq j$\\
LHS = $v_{max(0,j+k-n)0}$ = RHS, if $j>i$.\\

For property $I_7$, LHS = $v_{max(0,j+k-n)0}$ = RHS, if $i \geq j$\\
LHS = $v_{max(0,i+k-n)0}$ = RHS, if $j>i$.

\item[c)]	Let $x=v_{i1}, y= v_{j0}, z= v_{k1}, \forall i,j,k \in \{0,1,2,..n\}$\\
For property $I_6$, LHS = $v_{min(n,n-i+k)1}$ = RHS, if $i,j \geq \dfrac{n}{2}$\\
LHS = $v_{min(n,j+k)1}$ = RHS, if $i,j< \dfrac{n}{2}$.\\ 

For property $I_7$, LHS = $v_{min(n,j+k)1}$ = RHS, if $i,j \geq \dfrac{n}{2}$\\
LHS = $v_{min(n,n-i+k)1}$ = RHS, if $i,j< \dfrac{n}{2}$.

\item[d)]Let $x=v_{i1}, y= v_{j0}, z= v_{k0}, \forall i,j,k \in \{0,1,2,..n\}$\\
For property $I_6$, LHS = $v_{max(0,i+k-n)0}$ = RHS, if $i,j \geq \dfrac{n}{2}$\\
LHS = $v_{min(n,n-k+j)1}$ = RHS, if $i,j< \dfrac{n}{2}$.\\

For property $I_7$, LHS = $v_{min(n,n-k+j)1}$ = RHS, if $i,j \geq \dfrac{n}{2}$\\
LHS = $v_{max(0,i+k-n)0}$ = RHS, if $i,j< \dfrac{n}{2}$.

\item[e)]	Let $x=v_{i0}, y= v_{j1}, z= v_{k1}, \forall i,j,k \in \{0,1,2,..n\}$\\
For property $I_6$, LHS = $v_{min(n,n-j+k)1}$ = RHS, if $i,j \geq \dfrac{n}{2}$\\
LHS = $v_{min(n,j+k)1}$ = RHS, if $i,j< \dfrac{n}{2}$.\\ 

For property $I_7$, LHS = $v_{min(n,i+k)1}$ = RHS, if $i,j \geq \dfrac{n}{2}$\\
LHS = $v_{min(n,n-i+k)1}$ = RHS, if $i,j< \dfrac{n}{2}$. 

\item[f)]	Let $x=v_{i0}, y= v_{j1}, z= v_{k0}, \forall i,j,k \in \{0,1,2,..n\}$\\
For property $I_6$, LHS = $v_{max(0,j+k-n)0}$ = RHS, if $i,j \geq \dfrac{n}{2}$\\
LHS = $v_{min(n,n-k+i)1}$ = RHS, if $i,j< \dfrac{n}{2}$.\\ 

For property $I_7$, LHS = $v_{min(n,n-k+i)1}$ = RHS, if $i,j \geq \dfrac{n}{2}$\\
LHS = $v_{max(0,j+k-n)0}$ = RHS, if $i,j< \dfrac{n}{2}$.

\item[g)]	Let $x=v_{i0}, y= v_{j0}, z= v_{k1}, \forall i,j,k \in \{0,1,2,..n\}$\\
For property $I_6$, LHS = $v_{min(n,j+k)1}$ = RHS, if $i \geq j$\\
LHS = $v_{min(n,i+k)1}$ = RHS, if $i< j$.\\
 
For property $I_7$, LHS = $v_{min(n,i+k)1}$ = RHS, if $i \geq j$\\
LHS = $v_{min(n,j+k)1}$ = RHS, if $i< j$. 

\item[h)]Let $x=v_{i0}, y= v_{j0}, z= v_{k0}, \forall i,j,k \in \{0,1,2,..n\}$\\
For property $I_6$, LHS = $v_{min(n,n-k+j)1}$ = RHS, if $i \geq j$\\
LHS = $v_{min(n,n-k+i)1}$ = RHS, if $i< j$.\\ 

For property $I_7$, LHS = $v_{min(n,n-k+i)1}$ = RHS, if $i \geq j$\\
LHS = $v_{min(n,n-k+j)1}$ = RHS, if $i< j$. 

\end{enumerate}
Thus, $L$ is a \textit{lattice implication algebra }.
\end{proof}
\begin{definition} Propositions having linguistic truth values are called Linguistic truth-valued Propositions (LTVP) and is denoted by $\mathscr{P}$ . If $e$ is a truth valuation of a LTVP  then $e: \mathscr{P}\rightarrow L$. 
\end{definition}
\begin{definition} Let $P \in \mathscr{P}$ denote an atom, the fundamental element of LTVP. Any  formula of LTVP is defined recursively as follows:
\begin{enumerate}
\item $P$ is a formula.
\item If $G$ is a formula then $G^\prime$ is also a formula.
\item If $G, H$ are LTVP formulae then $G\wedge H, G\vee H, G \rightarrow H$ are formulae.
\item Any symbolic string formed using 1,2,3 a finite number of times is called a formulae in LTVP. No other string is a formulae.
\end{enumerate}
\end{definition}
\begin{definition}Truth value of a formula of LTVP is defined recursively as follows:
\begin{enumerate}
\item If $G \in \mathscr{P} $ then $e(G^\prime)= (e(G))^\prime$ is also a formula.
\item If $G, H \in \mathscr{P}$  then $e(G\wedge H)= e(G)\wedge e(H); e(G\vee H)= e(G)\vee e(H); e(G \rightarrow H)= e(G) \rightarrow e(H)$
\end{enumerate}
\end{definition}
Next we discuss the case where all the elements of $V$ do not satisfy the following order $ i \in\{0,1,...n\}, v_{i0} \leq v_{(n-i)1}$, i.e $\exists $ at least one $i$ for which the elements are non- comparable, 
then also $V$ is a partially ordered set and may be represented by the following (Figure 2) Hasse diagram.
\begin{figure}[h]
\centering
\begin{minipage}[b]{3in}
\hfill
  \includegraphics[width=2in, height=3.0in]{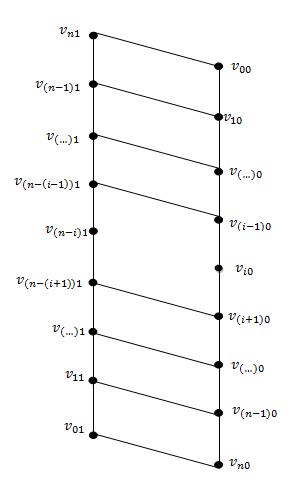}
\caption{Hasse Diagram of Linguistic Truth Value set V satisfying QLIA}
\end{minipage}
 \end{figure}
\begin{theorem}
Let $V$ be the linguistic truth value set, then $(V,\leq)$ is a poset.  If $\mathscr{V}= (V,\vee , \wedge, O,I)$ where $O= v_{n0}, I= v_{n1}$ and $v_{i0}$ and $v_{(n-i)1}$ are non comparable $(Figure 2)$, and the operation $"\vee "$ and $"\wedge "$ are defined as 
$\forall k, l \in \{0,1,..n\}, k\leq l $ 
\begin{enumerate}
\item $v_{k1} \vee v_{l1} = v_{l1}$
\item$ v_{k0} \vee v_{l0} = v_{k0}$
\item $v_{k1} \vee v_{l0} = 
\begin{cases}
v_ {k1},\ k \neq (n-i)\\
v_{(n-(i-1))1},\ k= (n-i) \end{cases} ,~ if~ n\leq (k+l)$
\item $v_{k1} \vee v_{l0} = 
\begin{cases}
v_ {(n-l)1},\ l\neq i\\
v_{(n-(i-1))1},\ l=i \end{cases}, ~ if ~n\geq (k+l)$
\item $v_{k1} \wedge v_{l1} = v_{i1}$
\item $v_{k0} \wedge v_{l0} = v_{j0}$
\item $v_{k1} \wedge v_{l0} = 
\begin{cases}
v_ {l0}\\
v_{(i+1)0},\ k=(n-i),\ l=i \end{cases}, ~ if~ n\leq (k+l)$
\item $v_{k1} \wedge v_{l0} = 
\begin{cases}
v_ {(n-k)0},\ k\neq (n-i)\\
v_{(i+1)0},\ k=(n-i)\end{cases}, ~ if ~n\geq (k+l)$
\end{enumerate}
then $\mathscr{V}$ forms a lattice.
\end{theorem}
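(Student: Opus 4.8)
The plan is to exploit the standard characterization that a poset is a lattice precisely when every two-element subset possesses a least upper bound (l.u.b.) and a greatest lower bound (g.l.b.). Since $(V,\le)$ has already been shown to be a poset and its order is fixed by the Hasse diagram of Figure 2, it suffices to verify that the element called $x\vee y$ in the statement is in fact the l.u.b. of $x$ and $y$, and that $x\wedge y$ is their g.l.b. Proving this simultaneously establishes that the two operations are well defined and that $\mathscr{V}$ is a lattice, while sidestepping a direct associativity computation.

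First I would pin down the order relations read off from Figure 2. The chain $V_1=\{v_{k1}\}$ is increasing in its index and the chain $V_0=\{v_{k0}\}$ is decreasing in its index, with $O=v_{n0}$ and $I=v_{n1}$. The cross relations $v_{k0}\le v_{(n-k)1}$ hold for every $k$ \emph{except} at the distinguished index, where by hypothesis $v_{i0}$ and $v_{(n-i)1}$ are incomparable. I would make explicit, for each $k$, which true-valued elements dominate a given $v_{k0}$ and which false-valued elements sit below a given $v_{k1}$, paying attention to how the removal of the single comparability $v_{i0}\le v_{(n-i)1}$ shifts these upper and lower sets by one step.

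With the order in hand, the verification splits according to the truth bits of the two arguments. For the homogeneous pairs $v_{k1},v_{l1}$ and $v_{k0},v_{l0}$ the claimed values are the obvious extremes of the relevant chain, so checking that they are l.u.b. and g.l.b. is immediate. The only substantive case is the mixed pair $v_{k1}$ and $v_{l0}$, where the defined operation branches on $n\le k+l$ versus $n\ge k+l$ and carries the exceptional values $v_{(n-(i-1))1}$ and $v_{(i+1)0}$ at the non-comparable index. For each branch I would (a) confirm the proposed value is a common upper bound (respectively lower bound) of $v_{k1}$ and $v_{l0}$, and (b) show that no strictly smaller upper bound (respectively larger lower bound) exists by examining the immediate predecessors and successors along both chains.

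The main obstacle will be exactly this mixed case at the distinguished index: because the edge $v_{i0}\le v_{(n-i)1}$ is absent, the naive join or meet fails to be a bound, and one must verify that stepping to $v_{(n-(i-1))1}$ (respectively $v_{(i+1)0}$) yields the genuine l.u.b. (respectively g.l.b.) --- that is, that there is no admissible element lying between the gap and the corrected value. Handling these boundary indices consistently, and confirming that the correction is forced rather than optional, is where the care is needed. For readers who prefer the route of Theorem 2.2, the same facts can instead be packaged as direct checks of idempotency, commutativity, associativity and absorption; there associativity carries the entire burden, since each of its eight truth-bit patterns further subdivides at the special index, and it is precisely that explosion of subcases which the l.u.b./g.l.b. argument is designed to avoid.
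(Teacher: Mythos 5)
Your approach is sound but genuinely different from what the paper does: the paper's entire proof of this theorem is the remark that it is ``similar to that of Theorem 2.2,'' i.e.\ a purely equational verification of idempotency, commutativity, associativity and absorption by brute-force case analysis on the truth bits, now with extra subcases wherever the distinguished index $i$ intervenes. You instead fix the order of Figure 2 first and verify that the tabulated $\vee$ and $\wedge$ are the least upper bound and greatest lower bound of each pair; since a poset in which every pair has a l.u.b.\ and g.l.b.\ is a lattice, the algebraic laws then come for free, and you avoid the eight-fold associativity explosion entirely --- that is a real economy, and it also forces you to make explicit the order-theoretic content ($v_{l0}\leq v_{k1}$ iff $n\leq k+l$ and $(l,k)\neq(i,n-i)$) that the paper leaves implicit. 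One point of care when you execute the plan: carried out literally, your l.u.b.\ check will show that in item 3 the exceptional value $v_{(n-(i-1))1}$ is the join only when $l=i$ (equivalently $k+l=n$); for $k=n-i$ and $l>i$ the element $v_{(n-i)1}$ is already an upper bound of $v_{l0}$, so the join is $v_{k1}$ itself, and taking the table at face value there would even break absorption. So you should read the exceptional subcase of item 3 with the proviso $l=i$ (exactly as item 7 states it), or flag the discrepancy; your method surfaces this imprecision in the statement, whereas the paper's one-line appeal to Theorem 2.2 passes over it. With that proviso, your homogeneous cases are immediate as you say, and the mixed cases at the special index do give $v_{(n-(i-1))1}$ and $v_{(i+1)0}$ as the forced corrections, so the plan goes through.
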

\begin{proof}
The proof is similar to that of the theorem $2.2$.
\end{proof}
With the unary operation (inverse operation) $"\prime"$ and the binary operation (implication operation)  $"\rightarrow "$ defined on $V$ as given in the definition $2.2$ we can prove that the structure $L_1=(V,\wedge, \vee,\prime,O,I,\rightarrow)$ where $V$ is the set given above forms a Quasi lattice implication algebra.
\begin{theorem}
If $L_1=(V, \vee , \wedge ,\prime ,O,I,\rightarrow )$ then $L_1$  is a Quasi lattice implication algebra.
\end{theorem}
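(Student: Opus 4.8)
The plan is to verify the five defining axioms $I_1$--$I_5$ of a quasi-lattice implication algebra from Definition~2.1, after first recording that $(V,\vee,\wedge,\prime,O,I)$ is a bounded lattice carrying an order-reversing involution. The bounded-lattice part (with $O=v_{n0}$ and $I=v_{n1}$) is furnished verbatim by Theorem~2.4, the Figure~2 analogue of Theorem~2.2, so nothing new is required there. I stress at the outset that I would \emph{not} attempt $I_6$ and $I_7$: those are exactly the two axioms separating a lattice implication algebra from a quasi one, and it is their failure on the non-comparable pairs of Figure~2 that forces $L_1$ into the ``quasi'' class rather than the full one proved in Theorem~2.3.

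The decisive observation is that both the inverse $\prime$ and the implication $\rightarrow$ are given by the \emph{same} index formulas of Definition~2.2 in the Figure~2 case as in the Figure~1 case; neither definition refers to the Hasse diagram, only to arithmetic on the subscripts. Consequently the verifications of $I_1$ through $I_5$ are purely numerical identities in $i,j,k$ and $n$, word-for-word identical to the computations already carried out in the proof of Theorem~2.3. For $I_1$ I would reproduce the eight cases (a)--(h) according to the truth-tags of $x,y,z$; for $I_2$--$I_5$ I would reproduce the short direct checks given there. The point to emphasise is that none of those computations ever invoked the explicit form of $\vee$ or $\wedge$ --- they used only the four implication clauses --- so they transfer to the Figure~2 setting unchanged.

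The genuinely new step, and the one I expect to be the main obstacle, is confirming that $\prime$ is an \emph{order-reversing} involution for the Figure~2 order. The involution identity $((v_{ij})')'=v_{ij}$ is immediate. For order-reversal I would check three things. First, on $V_1$ and on $V_0$ separately the map $v_{i1}\leftrightarrow v_{i0}$ reverses each chain, which follows from $i\le j\Rightarrow v_{i1}\le v_{j1}$ together with $v_{j0}\le v_{i0}$. Second, for each comparable cross-relation $v_{k0}\le v_{(n-k)1}$ that survives in Figure~2, applying $\prime$ yields $v_{(n-k)0}\le v_{k1}$, a relation of the same type. Third, the map must send each non-comparable pair to a non-comparable pair; this is the subtle part, since $\prime$ carries $\{v_{i0},v_{(n-i)1}\}$ to $\{v_{i1},v_{(n-i)0}\}$, which is the non-comparable pair associated with the index $n-i$. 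Hence incomparability is preserved precisely when the set of indices at which $v_{i0}$ and $v_{(n-i)1}$ are non-comparable is symmetric under $i\mapsto n-i$, and I would point out that the pattern encoded in Figure~2 has exactly this symmetry.

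Once the lattice-with-involution structure and $I_1$--$I_5$ are in hand, Definition~2.1 yields directly that $L_1$ is a quasi-lattice implication algebra, completing the proof. I would close with a brief remark exhibiting, on a single non-comparable index, why $(x\vee y)\rightarrow z=(x\rightarrow z)\wedge(y\rightarrow z)$ breaks down; this both explains the prefix ``quasi'' and confirms that $L_1$ genuinely fails to be a full lattice implication algebra.
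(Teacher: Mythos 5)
Your proposal matches the paper's own proof in all essentials: it cites Theorem~2.4 for the bounded lattice, notes that $\prime$ is an involution, observes that the verifications of $I_1$--$I_5$ carry over verbatim from Theorem~2.3 because those axioms involve only the index formulas for $\rightarrow$ and $\prime$ (not $\vee$ or $\wedge$), and accounts for the ``quasi'' prefix via the failure of $I_6$, $I_7$ on an incomparable pair --- which the paper exhibits inside the proof (with $x=v_{(n-i)1}$, $y=v_{i0}$, $z=v_{k1}$, $i+k+1<n$) while you relegate it to a closing remark. The only substantive difference is that you check the order-reversing property of $\prime$ (including preservation of incomparable pairs) more carefully than the paper, which only verifies the involution identity; this is a point in your favour rather than a gap.
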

\begin{proof} 
$(V, \vee , \wedge ,\prime ,O,I,\rightarrow )$ forms a bounded lattice due to theorem $2.4$.\\$"\prime"$ is an order reversing involution as $$ \forall i\in \{1,2,..n\}, \forall j \in \{0, 1\},  ((v_{ij})^\prime)^\prime = (v_{i(1-j)})^\prime=v_{ij}$$
The proof is similar to the theorem $2.3$ for the properties $I_1-I_5$. Now we check for the properties $I_6: (x\vee y)\rightarrow z= (x\rightarrow z)\wedge (y\rightarrow z)$ and $I_7: (x\wedge y)\rightarrow z= (x\rightarrow z)\vee (y\rightarrow z)$.\\

Let $x=v_{(n-i)1}, y= v_{i0}, z= v_{k1}$ for any $i \in \lbrace 1,2,...,(n-1)\rbrace$ and $k \in \lbrace 0,1,2,...,n\rbrace$ with $i+k+1<n$\\
For property $I_6:$
LHS = $v_{(i+k-1)1}$ \\ RHS = $v_{(i+k)1}$.\\ 
For property $I_7:$
LHS = $v_{(i+k+1)1}$ \\ RHS = $v_{(i+k)1}$.\\
Thus, we see that $L_1$ does not satisfy the properties $I_6$ and $I_7$ and therefore it is a \textit{Quasi lattice implication algebra}.
\end{proof}
\begin{definition} Propositions having Quasi linguistic truth values are called Quasi Linguistic truth-valued Propositions (QLTVP) and is denoted by $\mathscr{Q}$ . If $e$ is a truth valuation of a QLTVP  then $e: \mathscr{Q}\rightarrow L_{1}$. 
\end{definition}
\begin{definition} Let $Q \in \mathscr{Q}$ denote an atom, the fundamental element of QLTVP. Any  formula of QLTVP is defined recursively as follows:
\begin{enumerate}
\item $Q$ is a formula.
\item If $G$ is a formula then $G^\prime$ is also a formula.
\item If $G, H$ are QLTVP formulae then $G\wedge H, G\vee H, G \rightarrow H$ are formulae.
\item Any symbolic string formed using 1,2,3 a finite number of times is called a formulae in QLTVP. No other string is a formulae.
\end{enumerate}
\end{definition}

\section{Reasoning In Linguistic Truth-valued Propositions}
 Truth values of Modus Ponens and Modus Tollens rules for Linguistic truth-valued Propositions (LTVP) are computed in this section. It is observed that the fundamental reasoning tools are not always absolutely true for LTVP.\\
Let $\mathscr{P}$ = \{P/ P is a LTVP\} be the set of all LTVPs. The \textit{truth evaluation e} is a function given by $e: \mathscr{P} \rightarrow V $. Henceforth we will denote $P^\prime $ by $\neg P$.

\begin{theorem}If $P,Q \in \mathscr{P}$ and $e(P)=v_{i1}; e(Q)= v_{j1}$,  then the truth values of the Modus Ponens and Modus Tollens are as  follows:
\begin{enumerate}
 \item $e\{(P\bigwedge(P\rightarrow Q))\rightarrow Q\}= 
 \begin{cases}
 v_{n1},&\text{ if} \ (i \leq j)\\ 
v_{(n-i+j)1} &\text{if}\ (i \geq j)~and~ 2i\leq (n+j)\\ 
v_{i1} &\text{if} \ (i \geq j)~and~ 2i\geq (n+j)\\
\end{cases}$\\
\item  $e\{(\neg Q)\bigwedge(P\rightarrow Q) \rightarrow(\neg P)\}= 
 \begin{cases}
 v_{n1},&\text{ if} \ (i \leq j)\\ 
v_{(n-i+j)1} &\text{if}\ j \leq i\leq 2j\\ 
v_{(n-j)1} &\text{if} \ i> 2j >j\\
\end{cases}$\\
\end{enumerate}
\end{theorem}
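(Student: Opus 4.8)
The plan is to evaluate each compound formula by unfolding the recursive truth-valuation rules of Definition 2.4, so that every subformula reduces to a finite composition of the lattice operations of Theorem 2.2 and the implication operation of Definition 2.2; the remaining work is then to collapse the nested $\min$/$\max$ expressions by a case split on the relative sizes of $i$, $j$ and $n$.

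For the Modus Ponens formula $(P\wedge(P\rightarrow Q))\rightarrow Q$, I would first evaluate the inner implication: since $e(P)=v_{i1}$ and $e(Q)=v_{j1}$, clause (3) of Definition 2.2 gives $e(P\rightarrow Q)=v_{i1}\rightarrow v_{j1}=v_{\min(n,\,n-i+j)1}$, which is $v_{n1}$ when $i\leq j$ and $v_{(n-i+j)1}$ when $i>j$. Writing $m=\min(n,n-i+j)$, the conjunction is a meet of two $1$-type values, so by clause (5) of Theorem 2.2 one has $e(P\wedge(P\rightarrow Q))=v_{i1}\wedge v_{m1}=v_{\min(i,m)1}$. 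The comparison $i\leq m$ is exactly $2i\leq n+j$, and this is what produces the two-way split in the $i\geq j$ regime. Applying clause (3) a final time to the indices $\min(i,m)$ and $j$ yields the three stated values, and the boundary case $i=j$ is seen to be consistent across the first two branches.

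For the Modus Tollens formula $(\neg Q\wedge(P\rightarrow Q))\rightarrow\neg P$, I would use the involution to record $e(\neg Q)=v_{j1}'=v_{j0}$ and $e(\neg P)=v_{i1}'=v_{i0}$, and reuse $e(P\rightarrow Q)=v_{m1}$ from above. The new ingredient is the mixed meet $v_{j0}\wedge v_{m1}$, for which clauses (7)--(8) of Theorem 2.2 force a split according to whether $n\leq m+j$; substituting $m=n-i+j$ turns this condition into $i\leq 2j$, which is precisely the boundary between the second and third branches. In the branch $n\leq m+j$ the meet is $v_{j0}$ and the final step uses clause (4) as $v_{j0}\rightarrow v_{i0}=v_{\min(n,\,n-i+j)1}$; in the branch $n\geq m+j$ the meet is $v_{(n-m)0}=v_{(i-j)0}$, and clause (4) then gives $v_{(i-j)0}\rightarrow v_{i0}=v_{(n-j)1}$. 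Collecting the cases reproduces the three claimed values.

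The main obstacle is purely the bookkeeping of the nested extremal functions: one must verify at each stage that the intermediate value genuinely lands in the $1$-chain or the $0$-chain, so that the correct clause of Theorem 2.2 and Definition 2.2 applies, and one must confirm that the derived case boundaries ($2i\leq n+j$ for Modus Ponens, $i\leq 2j$ for Modus Tollens) are logically exhaustive and that the shared endpoints agree. No single step is conceptually hard, but keeping the $\min$ of a $\min$ and the sum-conditions straight is where an error would most easily creep in.
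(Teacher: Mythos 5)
Your proposal is correct and follows essentially the same route as the paper's own proof: evaluate $e(P\rightarrow Q)$ via clause (3) of Definition 2.2, take the meet using the appropriate clauses of Theorem 2.2 (pure $1$-chain meet for Modus Ponens, mixed meet with the split $n\leq m+j$, i.e.\ $i\leq 2j$, for Modus Tollens), and apply the final implication, with the same case boundaries $2i\lessgtr n+j$ and $i\lessgtr 2j$. The only cosmetic difference is that you parametrize by $m=\min(n,n-i+j)$ instead of splitting on $i\leq j$ versus $i>j$ at the outset, which collapses the paper's Cases 1a/1b and 2a/2b into single computations but yields identical values in every branch.
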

\begin{proof} Case 1a: Let $i\leq j$,\\
$ e(P\rightarrow Q)= v_{n1}$.\\
So, $e(P\wedge (P\rightarrow Q))= 
e(P)\wedge e(P\rightarrow Q)= v_{i1}$\\
 $e(P\wedge(P\rightarrow Q)\rightarrow Q)= e(P\wedge(P\rightarrow Q))\rightarrow e(Q)= v_{n1}$\\
Case1b:Let $i>j$,\\
$e(P\rightarrow Q)= v_{(n-i+j)1}$.\\
So, $e(P\wedge (P\rightarrow Q))= 
e(P)\wedge e(P\rightarrow Q)= \begin{cases}
v_{i1} &\text{if}\  2i\leq (n+j)\\ 
v_{(n-i+j)1} &\text{if} \ 2i\geq (n+j)\\
\end{cases}$\\
$e(P\wedge(P\rightarrow Q)\rightarrow Q)= e(P\wedge(P\rightarrow Q))\rightarrow e(Q)= 
\begin{cases}
v_{(n-i+j)1} &\text{if}\  2i\leq (n+j)\\ 
v_{i1} &\text{if} \ 2i\geq (n+j)\\
\end{cases}$\\
Case2a: Let $i\leq j$,\\
 $e(\neg Q\wedge (P\rightarrow Q))= 
e(\neg Q)\wedge e(P\rightarrow Q)= v_{j0}$\\
 $e(\neg Q\wedge(P\rightarrow Q)\rightarrow \neg P)= e(\neg Q\wedge(P\rightarrow Q))\rightarrow e(\neg P)= v_{n1}$.\\
Case 2b:Let $i>j$,\\
 $e(\neg Q\wedge (P\rightarrow Q))= 
e(\neg Q)\wedge e(P\rightarrow Q)= \begin{cases}
v_{j0} &\text{if}\  i\leq 2j\\ 
v_{(i-j)0} &\text{if} \ i\geq 2j\\
\end{cases}$\\
$e(\neg Q\wedge(P\rightarrow Q)\rightarrow \neg P)= e(\neg Q\wedge(P\rightarrow Q))\rightarrow e(\neg P)= \begin{cases}
v_{(n-i+j)1} &\text{if}\  i\leq 2j\\ 
v_{(n-j)1} &\text{if} \ i> 2j\\
\end{cases}$\\
\end{proof}
\begin{remark} If the truth evaluation of the propositions $P$ and $Q$ is restricted to $V_1  = \{v_{01},v_{11},v_{21}..\\..,v_{n1}\}$ and $e(P)$ is less than or equal to $e(Q)$ then both Modus Ponens and Modus Tollens rules are absolutely true.\\
If $ e(P)= e(Q)=v_{n1}$ then $P$ and $Q$ are classical propositions and Modus Ponens  and  Modus Tollens rules are absolutely true in such cases.\\
However, if  $e(P)$ is greater than or equal to $e(Q)$ then both Modus Ponens and Modus Tollens rules have graded truth values.\\
\end{remark}
\begin{theorem}If $P,Q \in \mathscr{P}$ and $e(P)=v_{i0}; e(Q)= v_{j0}$, then the truth values of the Modus Ponens and Modus Tollens are as  follows:
\begin{enumerate}
 \item $e\{(P\bigwedge(P\rightarrow Q))\rightarrow Q\}= 
 \begin{cases}
 v_{n1},&\text{ if} \ (i \geq j)\\ 
v_{(n-j+i)1} &\text{if}\  i\leq j \leq 2i \\ 
v_{(n-i)1} &\text{if} \  j\geq 2i \\
\end{cases}$\\
\item  $e\{(\neg Q)\bigwedge(P\rightarrow Q) \rightarrow(\neg P)\}= 
 \begin{cases}
 v_{n1},&\text{ if} \ (i\geq j)\\ 
v_{(n-j+i)1} &\text{if}~i \leq j~and~\  2j\leq (n+i)\\ 
v_{l1} &\text{if}~i \leq j~and~ \ 2j\geq (n+i)\\
\end{cases}$\\
\end{enumerate}
\end{theorem}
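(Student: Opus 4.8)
The plan is to mirror, almost verbatim, the computation carried out for Theorem 3.1, now specialised to the case where both atoms receive false truth values, $e(P)=v_{i0}$ and $e(Q)=v_{j0}$. The engine of the whole argument is the recursive evaluation rule of Definition 2.5, together with the explicit tables for $\rightarrow$ in Definition 2.2 and for $\vee,\wedge$ in Theorem 2.2; no new structural fact is needed. First I would compute the common inner implication: by Definition 2.2(4), $e(P\rightarrow Q)=v_{i0}\rightarrow v_{j0}=v_{\min(n,\,n-j+i)1}$, which collapses to $v_{n1}$ when $i\geq j$ and to $v_{(n-j+i)1}$ when $i<j$. This single $\min$ is the source of the primary case split $i\geq j$ versus $i<j$ that runs through both parts of the statement.

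For the Modus Ponens clause I would next evaluate $e(P\wedge(P\rightarrow Q))=v_{i0}\wedge e(P\rightarrow Q)$. When $i\geq j$ this is $v_{i0}\wedge v_{n1}$, a mixed meet that by Theorem 2.2(7) returns $v_{i0}$, and one more application of $\rightarrow$ gives $v_{i0}\rightarrow v_{j0}=v_{n1}$, the first branch. When $i<j$ the meet $v_{i0}\wedge v_{(n-j+i)1}$ is again mixed; reading Theorem 2.2(7)--(8) with true-index $n-j+i$ and false-index $i$, comparing $n$ with $(n-j+i)+i$ splits into $j\leq 2i$ (value $v_{i0}$) and $j\geq 2i$ (value $v_{(j-i)0}$). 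Feeding these through the final $\rightarrow v_{j0}$ via Definition 2.2(4) yields $v_{(n-j+i)1}$ and $v_{(n-i)1}$ respectively, matching the three stated cases.

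The Modus Tollens clause follows the same skeleton after recording $\neg Q=(v_{j0})'=v_{j1}$ and $\neg P=(v_{i0})'=v_{i1}$ from the inverse operation. Here $e(\neg Q\wedge(P\rightarrow Q))=v_{j1}\wedge e(P\rightarrow Q)$ is now a meet of two \emph{true} elements, so Theorem 2.2(5) applies and gives $v_{\min(j,\,n-j+i)1}$; comparing $2j$ with $n+i$ produces the secondary split. A last application of Definition 2.2(3) to $\rightarrow v_{i1}$ then delivers $v_{(n-j+i)1}$ in the case $2j\leq n+i$ and, in the case $2j\geq n+i$, the value $v_{\min(n,j)1}=v_{j1}$; I expect the symbol $v_{l1}$ in the statement to be a typo for $v_{j1}$, and I would confirm this. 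The case $i\geq j$ again trivialises to $v_{n1}$.

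I do not anticipate any conceptual difficulty; the entire proof is deterministic bookkeeping. The one place demanding genuine care is the \emph{mixed} meets $v_{\text{false}}\wedge v_{\text{true}}$: the formulas of Theorem 2.2(7)--(8) are written asymmetrically in the true- and false-indices even though $\wedge$ is commutative, so I must be disciplined about which summand plays the role of the true index and which of the false index before testing whether $n\leq i+j$ or $n\geq i+j$. I would also verify the boundary values $i=j$, $j=2i$, and $2j=n+i$ to ensure the piecewise answers agree where the cases meet, and would flag the $v_{l1}$ entry for correction.
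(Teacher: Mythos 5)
Your proposal is correct and follows essentially the same route as the paper: compute $e(P\rightarrow Q)=v_{\min(n,n-j+i)1}$, split on $i\geq j$ versus $i<j$, evaluate the meets (splitting on $j\lessgtr 2i$ for Modus Ponens and $2j\lessgtr n+i$ for Modus Tollens), and apply the final implication, exactly as in the paper's Cases 1b and 2b, with the $i\geq j$ cases handled as in Theorem 3.1. Your reading of $v_{l1}$ as a typo for $v_{j1}$ agrees with what the paper's own computation produces in that branch.
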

\begin{proof}The proof for  case1a and 2a i.e for $i\geq j$ is similar to the case-1a and 2a of  theorem 3.1. We will check for the other cases.\\
Case1b:
Let $i<j$.\\
$e(P\rightarrow Q)= v_{(n-j+i)1}$.\\
So, $e(P\wedge (P\rightarrow Q))= 
e(P)\wedge e(P\rightarrow Q)= \begin{cases}
v_{i0} &\text{if}\  j\leq 2i\\ 
v_{(j-i)0} &\text{if} \ j\geq 2i\\
\end{cases}$\\
$e(P\wedge(P\rightarrow Q)\rightarrow Q)= e(P\wedge(P\rightarrow Q))\rightarrow e(Q)= 
\begin{cases}
v_{(n-j+i)1} &\text{if}\  j\leq 2i\\ 
v_{(n-i)1} &\text{if} \ j\geq 2i\\
\end{cases}$\\
Case2b:Let $i<j$.\\
$e(\neg Q\wedge (P\rightarrow Q))= 
e(\neg Q)\wedge e(P\rightarrow Q)= \begin{cases}
v_{j1} &\text{if}\  2j\leq (n+i)\\ 
v_{(n-j+i)1} &\text{if} \ 2j\geq (n+i)\\
\end{cases}$\\
$e(\neg Q\wedge(P\rightarrow Q)\rightarrow \neg P)= e(\neg Q\wedge(P\rightarrow Q))\rightarrow e(\neg P)= \begin{cases}
v_{(n-j+i)1} &\text{if}\  2j\leq (n+i)\\ 
v_{j1} &\text{if} \ 2j\geq (n+i)\\
\end{cases}$\\
\end{proof}
\begin{remark} If the truth evaluation of the propositions $P$ and $Q$ is restricted to $V_0  = \{v_{00},v_{10},v_{20}..\\..,v_{n0}\}$ and $e(P)$ is greater than or equal to  $e(Q)$ then both Modus Ponens and Modus Tollens rules are absolutely true.\\
If $ e(P)= e(Q)=v_{00}$ then $P$ and $Q$ are classical propositions and Modus Ponens  and  Modus Tollens rules are absolutely true in such cases.\\
However, if  $e(P)$ is less than or equal to $e(Q)$ then both Modus Ponens and Modus Tollens rules have graded truth values.
\end{remark}
\begin{theorem}If $P,Q \in \mathscr{P}$ and $e(P)=v_{i1}; e(Q)= v_{j0}$, then the truth values of the Modus Ponens and Modus Tollens are as  follows:
\begin{enumerate}
 \item $e\{(P\bigwedge(P\rightarrow Q))\rightarrow Q\}= 
 \begin{cases}
 v_{n1},&\text{ if}\ ((i+j) \leq n)\\ 
v_{i1},&\text{ if} \ (i+j)  \geq n~and~\ n\leq (i+\dfrac{j}{2})\\
v_{(2n-i-j)1},&\text{ if}\ (i+j)  \geq n~and~\ n\geq (i+\dfrac{j}{2})
\end{cases}$\\
\item  $e\{(\neg Q)\bigwedge(P\rightarrow Q) \rightarrow(\neg P)\}= 
 \begin{cases}
 v_{n1},&\text{ if} \ ((i+j) \leq n)\\ 
v_{j1},&\text{ if}\ n\leq (j+\dfrac{i}{2})\\
v_{(2n-i-j)1},&\text{ if}\ n\geq (j+\dfrac{i}{2})
\end{cases}$\\
\end{enumerate}
\end{theorem}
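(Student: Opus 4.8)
The plan is to mimic the three-stage evaluation used in Theorems 3.1 and 3.2: first compute the inner implication $e(P\rightarrow Q)$, then form the meet with the antecedent, and finally apply the outer implication against the target, simplifying the nested $\min$/$\max$ at each stage via Definition 2.2 and the lattice operations of Theorem 2.2. The new feature here is that the premises straddle both halves of $V$: since $e(P)=v_{i1}\in V_1$ and $e(Q)=v_{j0}\in V_0$, rule $(1)$ of Definition 2.2 gives $e(P\rightarrow Q)=v_{i1}\rightarrow v_{j0}=v_{\max(0,\,i+j-n)0}\in V_0$. The sign of $i+j-n$ thus produces the primary case split $(i+j)\le n$ versus $(i+j)\ge n$ that heads both parts of the statement.

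For the Modus Ponens part, I would next compute $e\bigl(P\wedge(P\rightarrow Q)\bigr)=v_{i1}\wedge v_{\max(0,\,i+j-n)0}$ using the meet rules $(7)$ and $(8)$ of Theorem 2.2. When $(i+j)\le n$ the second factor is $v_{00}$ and the meet is $v_{(n-i)0}$; when $(i+j)\ge n$ the meet bifurcates according to whether $n\le i+\tfrac{j}{2}$ (equivalently $2i+j\ge 2n$), giving $v_{(i+j-n)0}$, or $n\ge i+\tfrac{j}{2}$, giving $v_{(n-i)0}$. The outer implication is then a $V_0\rightarrow V_0$ step, so rule $(4)$ of Definition 2.2 returns a $V_1$ value $v_{\min(n,\,n-j+a)1}$ with $a$ the $V_0$-index just obtained; collapsing the $\min$ against the governing inequality on $i+j$ yields precisely $v_{n1}$, $v_{i1}$, and $v_{(2n-i-j)1}$.

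The Modus Tollens part runs identically once the target $e(Q)=v_{j0}$ is replaced by $e(\neg P)=v_{i0}$ and the antecedent $e(Q)$ by $e(\neg Q)=v_{j1}$, both obtained from the involution $v_{k\ell}'=v_{k(1-\ell)}$. The inner implication $e(P\rightarrow Q)$ is unchanged, so the first split is again on $i+j$; the meet $v_{j1}\wedge v_{\max(0,\,i+j-n)0}$ now bifurcates on $n\le j+\tfrac{i}{2}$, and the closing $V_0\rightarrow V_0$ step delivers $v_{n1}$, $v_{j1}$, and $v_{(2n-i-j)1}$.

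The hard part is bookkeeping at the regime boundaries rather than any single computation. I would verify that each $\min$/$\max$ simplification holds on the relevant closed inequality, including the degenerate endpoints $i=n$, $j=n$, $i+j=n$, and $2i+j=2n$, and confirm that the piecewise answer is well defined where the cases meet. In particular $(i+j)\le n$ already forces $n\ge j+\tfrac{i}{2}$ and $n\ge i+\tfrac{j}{2}$, so in each part the three listed cases must be read with the first taking precedence and the other two partitioning the region $(i+j)\ge n$; checking this consistency is the only genuinely delicate point, the algebra in each individual branch being routine.
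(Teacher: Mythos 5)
Your proposal is correct and follows essentially the same route as the paper's proof: compute $e(P\rightarrow Q)=v_{\max(0,i+j-n)0}$, take the meet with $v_{i1}$ (resp.\ $v_{j1}$) via rules (7)--(8) of Theorem 2.2, and close with the $V_0\rightarrow V_0$ implication, splitting first on $i+j\lessgtr n$ and then on $n\lessgtr i+\tfrac{j}{2}$ (resp.\ $n\lessgtr j+\tfrac{i}{2}$), exactly as in the paper's Cases 1a--2b with the same intermediate values. Your added remark that the first case must take precedence where the regions overlap is a fair observation about how the statement's cases should be read, but it does not change the argument.
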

\begin{proof} Case 1a: Let $(i+j)\leq n$,\\
$ e(P\rightarrow Q)= v_{00}$.\\
So, $e(P\wedge (P\rightarrow Q))= 
e(P)\wedge e(P\rightarrow Q)= v_{(n-i)0}$\\
$e(P\wedge(P\rightarrow Q)\rightarrow Q)= e(P\wedge(P\rightarrow Q))\rightarrow e(Q)= v_{n1}$\\
Case 1b:Let $(i+j)  \geq n$,\\
$e(P\rightarrow Q)= v_{(i+j-n)0}$.\\
So, $e(P\wedge (P\rightarrow Q))= 
e(P)\wedge e(P\rightarrow Q)= 
\begin{cases}
v_{(i+j-n)0},\ n\leq (i+\dfrac{j}{2})\\
v_{(n-i)0},\ n\geq (i+\dfrac{j}{2})
\end{cases}$\\
$e(P\wedge(P\rightarrow Q)\rightarrow Q)= e(P\wedge(P\rightarrow Q))\rightarrow e(Q)= 
\begin{cases}
v_{i1},\ n\leq (i+\dfrac{j}{2})\\
v_{(2n-i-j)1},\ n\geq (i+\dfrac{j}{2})
\end{cases}$\\ 
Case 2a:Let $(i+j)\leq n$,\\
 $e(\neg Q\wedge (P\rightarrow Q))= 
e(\neg Q)\wedge e(P\rightarrow Q)= v_{(n-j)0}$\\
 $e(\neg Q\wedge(P\rightarrow Q)\rightarrow \neg P)= e(\neg Q\wedge(P\rightarrow Q))\rightarrow e(\neg P)= v_{n1}$.\\
Case 2b: Let $(i+j)  \geq n$,\\
 $e(\neg Q\wedge (P\rightarrow Q))= 
e(\neg Q)\wedge e(P\rightarrow Q)= 
\begin{cases}
v_{(i+j-n)0}, \ n\leq (j+\dfrac{i}{2})\\
v_{(n-j)0},\ n\geq (j+\dfrac{i}{2})
\end{cases}$\\
 $e(\neg Q\wedge(P\rightarrow Q)\rightarrow \neg P)= e(\neg Q\wedge(P\rightarrow Q))\rightarrow e(\neg P)= \begin{cases}
v_{j1},\ n\leq (j+\dfrac{i}{2})\\
v_{(2n-i-j)1},\ n\geq (j+\dfrac{i}{2})
\end{cases}$\\ \
\end{proof}
\begin{remark}If $ e(P)= v_{n1}$ and $e(Q)=v_{00}$ then $P$ and $Q$ are classical propositions having truth values absolutely true and absolutely false. Theorem 3.3 establishes the fact that Modus Ponens  and  Modus Tollens rules are absolutely true in such cases.
\end{remark}
\begin{theorem}If $P,Q \in \mathscr{P}$ and $e(P)=v_{i0}; e(Q)= v_{j1}$, then the truth values of the Modus Ponens and Modus Tollens are as  follows:
\begin{enumerate}
 \item $e\{(P\bigwedge(P\rightarrow Q))\rightarrow Q\}= 
 \begin{cases}
 v_{n1},&\text{ if}\ ((i+j) \geq n)\\ 
v_{(i+j)1},&\text{ if} \ (i+j)  \leq n~and~\ n\leq (2i+j)\\
v_{(n-i)1},&\text{ if}\ (i+j)  \leq n~and~\ n\geq (2i+j)
\end{cases}$\\
\item  $e\{(\neg Q)\bigwedge(P\rightarrow Q) \rightarrow(\neg P)\}= 
 \begin{cases}
 v_{n1},&\text{ if} \ ((i+j) \geq n)\\ 
v_{(i+j)1},&\text{ if}\  (i+j)  \leq n~and~\ n \leq (2j+i)\\
v_{(n-j)1},&\text{ if}\  (i+j)  \leq n~and~\ n\geq (2j+i)
\end{cases}$\\
\end{enumerate}
\end{theorem}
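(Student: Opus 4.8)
The plan is to follow the same pattern as the proof of Theorem 3.3, computing each compound truth value by peeling off one connective at a time and invoking the definitions of $\wedge$ and $\rightarrow$ from Theorem 2.2 and Definition 2.2. Since $e(P)=v_{i0}$ and $e(Q)=v_{j1}$, I would first record the two negations $\neg P = v_{i0}^{\prime}=v_{i1}$ and $\neg Q = v_{j1}^{\prime}=v_{j0}$, together with the common sub-term
$$e(P\rightarrow Q)=v_{i0}\rightarrow v_{j1}=v_{\min(n,\,i+j)1},$$
using clause 2 of the implication definition. The single $\min$ here forces the top-level dichotomy $i+j\geq n$ versus $i+j\leq n$ that appears in both parts of the statement.

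First I would dispose of the case $i+j\geq n$, where $e(P\rightarrow Q)=v_{n1}=I$. For Modus Ponens, $e(P\wedge(P\rightarrow Q))=v_{i0}\wedge v_{n1}=v_{i0}$ by clause 7 of Theorem 2.2 (since $n\le n+i$), and then $e\{(P\wedge(P\rightarrow Q))\rightarrow Q\}=v_{i0}\rightarrow v_{j1}=v_{n1}$ because $i+j\ge n$. For Modus Tollens the computation is identical with the roles of $j$ and $i$ interchanged in the meet: $v_{j0}\wedge v_{n1}=v_{j0}$ and $v_{j0}\rightarrow v_{i1}=v_{n1}$. This yields the $v_{n1}$ branch of both rules.

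The substantive work is the case $i+j\leq n$, where $e(P\rightarrow Q)=v_{(i+j)1}$. For Modus Ponens I would compute the meet $v_{i0}\wedge v_{(i+j)1}$ via Theorem 2.2 with the pair $(k,l)=(i+j,\,i)$: clause 7 gives $v_{i0}$ when $n\le 2i+j$, while clause 8 gives $v_{(n-i-j)0}$ when $n\ge 2i+j$. Applying the final $\rightarrow v_{j1}$ then produces $v_{(i+j)1}$ in the first subcase and $v_{\min(n,\,n-i)1}=v_{(n-i)1}$ in the second, matching part (1). Modus Tollens runs in parallel with $\neg Q=v_{j0}$ and $\neg P=v_{i1}$: the meet $v_{j0}\wedge v_{(i+j)1}$ is handled by the pair $(k,l)=(i+j,\,j)$, so the threshold becomes $n$ versus $i+2j$, and the concluding implication $\rightarrow v_{i1}$ delivers $v_{(i+j)1}$ when $n\le 2j+i$ and $v_{\min(n,\,n-j)1}=v_{(n-j)1}$ when $n\ge 2j+i$, which is exactly part (2).

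The only real hazard is bookkeeping: one must keep the three comparison parameters $i+j$, $2i+j$, and $i+2j$ separate, correctly decide between clauses 7 and 8 of the meet definition at each step, and check that the resulting $\min$ in the final implication collapses to the stated index (using $n-i\le n$ and $n-j\le n$). Because every branch reduces to a single evaluation of $\min$ or $\max$, no subtle lattice identity is required; the result follows purely by case analysis once the correct meet clause is selected.
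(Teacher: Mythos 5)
Your proposal is correct and follows essentially the same route as the paper's own proof: split on $i+j\geq n$ versus $i+j\leq n$ via $e(P\rightarrow Q)=v_{\min(n,i+j)1}$, then resolve the meets $v_{i0}\wedge v_{(i+j)1}$ and $v_{j0}\wedge v_{(i+j)1}$ by the thresholds $2i+j$ and $2j+i$ before applying the final implication. The computations and resulting case values agree with the paper throughout.
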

\begin{proof}Case 1a: Let $(i+j)\geq n$,\\
$ e(P\rightarrow Q)= v_{n1}$.\\
So, $e(P\wedge (P\rightarrow Q))= 
e(P)\wedge e(P\rightarrow Q)= v_{i0}$\\
$e(P\wedge(P\rightarrow Q)\rightarrow Q)= e(P\wedge(P\rightarrow Q))\rightarrow e(Q)= v_{n1}$\\
Case1b:Let $(i+j)\leq n$,\\
$e(P\rightarrow Q)= v_{(i+j)1}$.\\
So, $e(P\wedge (P\rightarrow Q))= 
e(P)\wedge e(P\rightarrow Q)= 
\begin{cases}
v_{i0}, \ n\leq (2i+j)\\
v_{(n-i-j)0},\ n\geq (2i+j)
\end{cases}$\\
$e(P\wedge(P\rightarrow Q)\rightarrow Q)= e(P\wedge(P\rightarrow Q))\rightarrow e(Q)= 
\begin{cases}
v_{(i+j)1}, \ n\leq (2i+j)\\
v_{(n-i)1}, \ n\geq (2i+j)
\end{cases}$\\
Case2a: Let $(i+j)\geq n$,\\
 $e(\neg Q\wedge (P\rightarrow Q))= 
e(\neg Q)\wedge e(P\rightarrow Q)= v_{j0}$\\
$e(\neg Q\wedge(P\rightarrow Q)\rightarrow \neg P)= e(\neg Q\wedge(P\rightarrow Q))\rightarrow e(\neg P)= v_{n1}$.\\
Case 2b: Let $(i+j)\leq n$,\\
$e(\neg Q\wedge (P\rightarrow Q))= 
e(\neg Q)\wedge e(P\rightarrow Q)= 
\begin{cases}
v_{j0}, \ n\leq (2j+i)\\
v_{(n-i-j)0},\ n\geq (2j+i)
\end{cases}$\\
 $e(\neg Q\wedge(P\rightarrow Q)\rightarrow \neg P)= e(\neg Q\wedge(P\rightarrow Q))\rightarrow e(\neg P)= 
\begin{cases}
v_{(i+j)1}, \ n\leq (2j+i)\\
v_{(n-j)1}, \ n\geq (2j+i)
\end{cases}$
\end{proof}
\begin{remark}If $ e(P)= v_{00}$ and $e(Q)=v_{n1}$ then $P$ and $Q$ are classical propositions having truth values absolutely true and absolutely false. Theorem 3.4 establishes the fact that Modus Ponens  and  Modus Tollens rules are absolutely true in such cases.
\end{remark}
Let us suppose that  the linguistic  truth values of a particular set of LTVP may be represented by the following lattice structure  (Fig 3). Truth values of the inference rules (ModusPonens and Modus Tollens) for a few  cases  are computed in the examples below .
\begin{figure}[h]
\centering
\begin{minipage}[b]{3in}
\hfill
  \includegraphics[width=3.0in, height=3.0in]{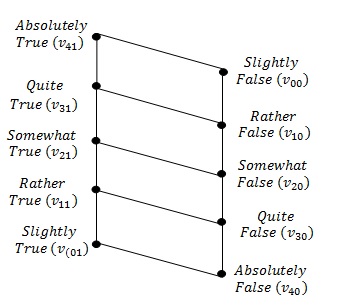}
\caption{Hasse Diagram of Linguistic Truth Value set V having five values}
\end{minipage}
 \end{figure} 
\begin{example} 
Modus Ponens and Modus Tollens rules are Quite True if an LTVP P is Quite True and $P\rightarrow Q$ where Q is Rather True.\\
Let $P,Q \in \mathscr{P}$ and $e(P) = Quite~True = v_{31}; e(Q) = Rather~True = v_{21}.$\\
Here we have taken $e(P) > e(Q)$ i.e. the case when $i > j$.
$e(P\rightarrow Q) = Somewhat~True$.\\
Then $e(P\wedge (P\rightarrow Q)) = Somewhat~True$ and $e(\neg Q)\wedge e(P\rightarrow Q) = Somewhat~False$.\\
Therefore, $e(P\wedge (P\rightarrow Q)\rightarrow Q) = Quite~True$ \\ and $e(\neg Q\wedge (P\rightarrow Q)\rightarrow \neg P) = Quite~True$.
\end{example}
\begin{example}
Modus Ponens rule is Somewhat True and Modus Tollens rule is Absolutely True if an LTVP P is Somewhat False and $P\rightarrow Q$ where Q is Absolutely False.\\
Let $P,Q \in \mathscr{P}$ and $e(P) = Somewhat~False = v_{20}; e(Q) = Absolutely~False = v_{40}.$\\
Here we have taken $e(Q) < e(P)$ i.e. the case when $i < j$.
$e(P \rightarrow Q) = Somewhat~True.$\\
Then $e(P \wedge (P\rightarrow Q)) = Somewhat~False$ and $e(\neg Q)\wedge e(P\rightarrow Q) = Somewhat~True$.\\
Therefore, $e(P \wedge (P \rightarrow Q) \rightarrow Q) = Somewhat~True$\\
and $e(\neg Q \wedge (P \rightarrow Q) \rightarrow \neg P) = Absolutely~True.$
\end{example}
 
\begin{example}
Modus Ponens rule is Somewhat True and Modus Tollens rule is Absolutely True if an LTVP P is Somewhat True and $P\rightarrow Q$ where Q is Absolutely False.\\
Let $P,Q \in \mathscr{P}$ and $e(P)= Somewhat~True=v_{(2)1}$; $e(Q)=  Absolutely~False= v_{(4)0}$.\\
Here we have taken the case when $(i+j)>n$.\\
$e(P\rightarrow Q)= Somewhat~False$\\
Then $e(P\wedge (P\rightarrow Q))= Somewhat~False$
and $e(\neg Q)\wedge e(P\rightarrow Q)= Somewhat~False$\\
Therefore, $e(P\wedge(P\rightarrow Q)\rightarrow Q)= Somewhat~True$\\
and $e(\neg Q\wedge(P\rightarrow Q)\rightarrow \neg P)= Absolutely~True.$\\
\end{example}

\begin{example}
Modus Ponens rule is Absolutely True and Modus Tollens rule is Somewhat True if an LTVP P is Slightly False and $P\rightarrow Q$ where Q is Somewhat True.\\
Let $P,Q \in \mathscr{P}$ and $e(P)= Slightly~False=v_{(0)0}$; $e(Q)=  Somewhat~True= v_{(2)1}$.\\
Here we have taken the case when $(i+j)<n$.\\
$e(P\rightarrow Q)= Somewhat~True$\\
Then $e(P\wedge (P\rightarrow Q))= Somewhat~False$
and $e(\neg Q)\wedge e(P\rightarrow Q)= Somewhat~False$\\
Therefore, $e(P\wedge(P\rightarrow Q)\rightarrow Q)= Absolutely~True$\\
and $e(\neg Q\wedge(P\rightarrow Q)\rightarrow \neg P)= Somewhat~True.$\\
\end{example}
\section{Reasoning In Quasi-Linguistic Truth-valued Propositions}
 Truth values of Modus Ponens and Modus Tollens rules for Quasi-Linguistic truth-valued Propositions (QLTVP) are computed in this section. It is observed that these fundamental reasoning tools are not always absolutely true.\\
In this section $V$, the set of linguistic truth values have a lattice structure represented by Fig.2 (i.e $v_{io}~ and ~v_{(n-i)1}$ are non comparable).Let $\mathscr{Q}$ = \{Q/ Q is a QLTVP\} be the set of all QLTVPs. The \textit{truth evaluation e} is a function given by $e: \mathscr{Q} \rightarrow V$.
\begin{theorem}
If $P,Q \in \mathscr{Q}$ and $e(P)=v_{k1}; e(Q)= v_{l1}$, then the truth values of the Modus Ponens and Modus Tollens are as  follows:
\begin{enumerate}
 \item $e\{(P\bigwedge(P\rightarrow Q))\rightarrow Q\}= 
 \begin{cases}
 v_{n1},&\text{ if} \ (k \leq l)\\ 
v_{(n-k+l)1} &\text{if}\  (k \geq l)~and~ 2k\leq (n+l)\\ 
v_{k1} &\text{if} \  (k \geq l)~and~2k\geq (n+l)\\
\end{cases}$\\
\item  $e\{(\neg Q)\bigwedge(P\rightarrow Q) \rightarrow(\neg P)\}= 
 \begin{cases}
 v_{n1},&\text{ if} \ (k \leq l)\\ 
v_{(n-k+l)1} &\text{if}\  l \leq  k\leq 2l~and~(k-l)\neq i\\ 
v_{(n-l)1} &\text{if} \  k\geq 2l~and~(k-l)\neq i\\
v_{(n-k+l)1} &\text{if}~k>l~and~2l > (k+1)~and~ (k-l)=i\\ 
v_{(n-l+1)1} &\text{if}~k>l~and~ 2l\leq (k+1)~and~ (k-l)=i\\
\end{cases}$\\
\end{enumerate}
\end{theorem}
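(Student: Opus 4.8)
The plan is to unfold each compound formula step by step, using the recursive truth-evaluation rules of Definition 2.5 together with the QLIA meet/join of Theorem 2.4 and the implication of Definition 2.2, in exactly the manner used for the LIA case in Theorem 3.1. The one genuinely new ingredient is isolating where the non-comparable pair of Figure 2 forces a departure from that earlier computation.

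For Part 1 (Modus Ponens), I would first compute $e(P\rightarrow Q)=v_{k1}\rightarrow v_{l1}=v_{min(n,n-k+l)1}$ by rule 3 of Definition 2.2, which again lies in $V_1$. Since $e(P)=v_{k1}$ is also in $V_1$, the meet $e(P\wedge(P\rightarrow Q))$ and the closing implication into $e(Q)=v_{l1}$ all stay inside the chain $V_1$. On $V_1$ the QLIA operations of Theorem 2.4 coincide with their LIA counterparts, because the distinguished incomparable pair of Figure 2 consists of one $V_0$-element and one $V_1$-element and so never arises among purely ``true'' values. Hence the case split ($k\leq l$; $k\geq l$ with $2k\leq n+l$; $k\geq l$ with $2k\geq n+l$) and its outcomes reproduce Theorem 3.1 Part 1 verbatim, and Part 1 follows.

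For Part 2 (Modus Tollens) the decisive step is $e(\neg Q\wedge(P\rightarrow Q))=v_{l0}\wedge v_{min(n,n-k+l)1}$, a meet of a $V_0$-element with a $V_1$-element, which is precisely where the incomparability can intervene. For $k\leq l$ one checks as before that the rule evaluates to $v_{n1}$. For $k>l$ I would set $a=n-k+l$ (the index of $P\rightarrow Q$) and $b=l$, and apply cases 7--8 of Theorem 2.4 to $v_{a1}\wedge v_{b0}$, splitting according as $n\leq a+b$ or $n\geq a+b$ (equivalently $k\leq 2l$ or $k\geq 2l$). Away from the distinguished index the generic branches reproduce the LIA meets, and after the final implication $\rightarrow\neg P=\rightarrow v_{k0}$ (evaluated by rule 4 of Definition 2.2) they yield $v_{(n-k+l)1}$ and $v_{(n-l)1}$, matching Theorem 3.1 Part 2. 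The new phenomenon is confined to $a=n-i$, i.e. $k-l=i$: there the special branches of cases 7--8 return $v_{(i+1)0}$ in place of the generic $v_{i0}$, one step higher in $V_0$, and pushing this through $\rightarrow v_{k0}$ raises the Modus Tollens value from $v_{(n-l)1}$ to $v_{(n-l+1)1}$.

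I expect the main obstacle to be the bookkeeping at the incomparable index $k-l=i$. Because $v_{a1}\wedge v_{b0}$ is defined by a nested case split --- first on whether $n\leq a+b$ or $n\geq a+b$, then on whether $a=n-i$ and whether $b=i$ --- I must verify exactly which sub-branch of cases 7--8 is active and confirm that no further exceptional value hides at the boundary. In particular the threshold separating $v_{(n-k+l)1}$ from $v_{(n-l+1)1}$ must be shown to sit at $2l=k+1$ rather than at $2l=k$: this shift by one reflects that the generic expression $v_{(n-k+l)1}$ already equals $v_{(n-l+1)1}$ at $k=2l-1$, one step before the special branch $v_{(i+1)0}$ begins to act at $k=2l$ (where $l=i$). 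Reconciling these two descriptions so that they agree across the boundary, and confirming that the subsidiary condition $b=i$ of case 7 only ever coincides with the boundary point, is the delicate part of the argument.
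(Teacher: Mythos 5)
Your proposal is correct and takes essentially the same route as the paper's proof: Part~1 is the unchanged LIA computation of Theorem~3.1 (all intermediate values stay in $V_1$), and Part~2 proceeds by the same meet-then-implication evaluation, split into the generic case $(k-l)\neq i$ and the special case $(k-l)=i$ where rules 7--8 of Theorem~2.4 replace $v_{i0}$ by $v_{(i+1)0}$. Your boundary analysis at $2l=k+1$ (where $v_{(n-k+l)1}$ and $v_{(n-l+1)1}$ coincide, one step before the special branch activates at $k=2l$) matches, and in fact justifies more carefully, the split the paper uses in its Case~2c.
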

\begin{proof} Case 1a: Let $k\leq l$,\\
$ e(P\rightarrow Q)= v_{n1}$.\\
So, $e(P\wedge (P\rightarrow Q))= 
e(P)\wedge e(P\rightarrow Q)= v_{k1}$\\
Thus, $e(P\wedge(P\rightarrow Q)\rightarrow Q)= e(P\wedge(P\rightarrow Q))\rightarrow e(Q)= v_{n1}$\\
Case1b:Let $k>l$,\\
$e(P\rightarrow Q)= v_{(n-k+l)1}$.\\
So, $e(P\wedge (P\rightarrow Q))= 
e(P)\wedge e(P\rightarrow Q)= \begin{cases}
v_{k1} &\text{if}\  2k\leq (n+l)\\ 
v_{(n-k+l)1} &\text{if} \ 2k\geq (n+l)\\
\end{cases}$\\
Thus, $e(P\wedge(P\rightarrow Q)\rightarrow Q)= e(P\wedge(P\rightarrow Q))\rightarrow e(Q)=\begin{cases} v_{(n-k+l)1} &\text{if}\  2k\leq (n+l)\\ 
v_{k1} &\text{if} \ 2k\geq (n+l)\\
\end{cases}$\\
Case 2a: Let $k\leq l$,\\
$e(\neg Q\wedge (P\rightarrow Q))= 
e(\neg Q)\wedge e(P\rightarrow Q)= v_{l0}$\\
so,$e(\neg Q\wedge(P\rightarrow Q)\rightarrow \neg P)= e(\neg Q\wedge(P\rightarrow Q))\rightarrow e(\neg P)= v_{n1}$.\\
Case2b: If $ k >l ~and~ (k-l)  \neq i $\\
$e(\neg Q\wedge (P\rightarrow Q))= 
e(\neg Q)\wedge e(P\rightarrow Q)= \begin{cases}
v_{l0} &\text{if}\   k\leq 2l\\ 
v_{(k-l)0} &\text{if} \  2l\leq k\\
\end{cases}$\\
and $e(\neg Q\wedge(P\rightarrow Q)\rightarrow \neg P)= e(\neg Q\wedge(P\rightarrow Q))\rightarrow e(\neg P)= \begin{cases}
v_{(n-k+l)1} &\text{if}\   k\leq 2l\\ 
v_{(n-l)1} &\text{if} \  k\geq 2l\\
\end{cases}$\\
Case2c: If $k >l ~and~ (k-l)  = i $\\
$e(\neg Q\wedge (P\rightarrow Q))= e(\neg Q)\wedge e(P\rightarrow Q)= v_{l0}\wedge v_{(n-i)1}= \begin{cases}
v_{l0} &\text{if}~ 2l > (k+1)\\ 
v_{(k-l+1)0} &\text{if} \ 2l\leq (k+1)\\
\end{cases}$\\
$e(\neg Q\wedge (P\rightarrow Q)\rightarrow \neg P)= e(\neg Q\wedge (P\rightarrow Q))\rightarrow e(\neg P)= \begin{cases}
v_{(n-k+l)1} &\text{if}~2l > (k+1)\\ 
v_{(n-l+1)1} &\text{if}~ 2l\leq (k+1)\\
\end{cases}$\\

\end{proof}
\begin{remark} Thus we see that if the truth evaluation of the propositions $P$ and $Q$ is restricted to $V_1  = \{v_{01},v_{11},v_{21}...,v_{n1}\}$ and $e(P)$ is less than $e(Q)$ then the Modus Ponens rule and the Modus Tollens rule is absolutely true.\\ If $e(P) = e(Q) = v_{n1}$ then $P$ and $Q$ are classical propositions and Modus Ponens and Modus
Tollens rules are absolutely true in such cases.\\
However, if  $e(P)$ is greater than or equal to $e(Q)$ then both Modus Ponens and Modus Tollens rules have graded truth values.
\end{remark}
\begin{theorem}If $P,Q \in \mathscr{Q}$ and $e(P)=v_{k0}; e(Q)= v_{l0}$, then the truth values of the Modus Ponens and Modus Tollens are as  follows:
\begin{enumerate}
 \item $e\{(P\bigwedge(P\rightarrow Q))\rightarrow Q\}= 
 \begin{cases}
 v_{n1},&\text{ if} \ (k \geq l)\\ 
v_{(n-l+k)1} &\text{if}\  k<  l \leq 2k~and (l-k) \neq i \\ 
v_{(n-k)1} &\text{if} \  l\geq 2k ~and (l-k) \neq i\\
v_{(n-l+k)1} &\text{if}\ k<  l~and~ 2k> (l+1)~and~(l-k)=i\\ 
v_{(n-k+1)1} &\text{if} \ k<  l~and~2k \leq (l+1)~and (l-k)=i\\
\end{cases}$\\
\item  $e\{(\neg Q)\bigwedge(P\rightarrow Q) \rightarrow(\neg P)\}= 
 \begin{cases}
 v_{n1},&\text{ if} \ (k \geq l)\\ 
v_{(n-l+k)1} &\text{if}\  2l\leq (n+k)\\ 
v_{l1} &\text{if} \ 2l\geq (n+k)\\
\end{cases}$\\
\end{enumerate}
\end{theorem}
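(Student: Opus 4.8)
The plan is to mirror the stepwise, inside-out evaluation already used for Theorems 3.2 and 4.1: at each stage I substitute $e(P)=v_{k0}$ and $e(Q)=v_{l0}$ into the implication clauses of Definition 2.2 and the meet/join rules of Theorem 2.4, working outward from $P\rightarrow Q$. Because $L_1$ is a \emph{quasi} lattice implication algebra, the computation will coincide in form with the comparable (LIA) case of Theorem 3.2 \emph{except} at the single non-comparable pair $v_{i0},\,v_{(n-i)1}$, where the exceptional ``$v_{(i+1)0}$'' and ``$v_{(n-(i-1))1}$'' branches of Theorem 2.4 must be invoked. I therefore expect the argument to split cleanly into a generic part (identical to Theorem 3.2) and an exceptional part that fires precisely when an intermediate value equals $v_{(n-i)1}$.

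For the Modus Ponens formula I would first compute $e(P\rightarrow Q)=v_{k0}\rightarrow v_{l0}=v_{\min(n,\,n-l+k)1}$ by clause 4 of Definition 2.2. When $k\geq l$ this equals $I=v_{n1}$, so $e(P\wedge(P\rightarrow Q))=v_{k0}\wedge I=v_{k0}$ and $v_{k0}\rightarrow v_{l0}=v_{n1}$, which is the first line. When $k<l$ we have $e(P\rightarrow Q)=v_{(n-l+k)1}$, and the crux is the meet $v_{k0}\wedge v_{(n-l+k)1}$. If $(l-k)\neq i$ the second factor is \emph{not} the non-comparable top-side element, so Theorem 2.4 (7)--(8) apply generically: the meet is $v_{k0}$ when $l\leq 2k$ and $v_{(l-k)0}$ when $l\geq 2k$, and a final implication ($\rightarrow v_{l0}$, clause 4) yields $v_{(n-l+k)1}$ and $v_{(n-k)1}$ respectively. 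When $(l-k)=i$ the factor $v_{(n-l+k)1}=v_{(n-i)1}$ is exactly the non-comparable element; here I invoke the exceptional branches to show $v_{k0}\wedge v_{(n-i)1}=v_{k0}$ for $k>i$ and $=v_{(i+1)0}$ for $k\leq i$. Applying $\rightarrow v_{l0}$ with $l=k+i$ then gives $v_{(n-l+k)1}$ and $v_{(n-k+1)1}$, matching the last two lines; the stated threshold $2k\gtrless l+1$ is just $k\gtrless i+1$ rewritten, and the two formulas coincide on the boundary $k=i+1$ (both equal $v_{(n-i)1}$), so the precise placement of the split is immaterial.

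For the Modus Tollens formula I would record that $e(\neg Q)=v_{l1}$ and $e(\neg P)=v_{k1}$, so every intermediate value now lies in the chain $V_1$. Concretely $e(\neg Q\wedge(P\rightarrow Q))=v_{l1}\wedge v_{\min(n,\,n-l+k)1}$ is a meet of two elements of the totally ordered set $V_1$, and this order is untouched by the deletion of the cross-edge $v_{i0}\leq v_{(n-i)1}$. Hence no exceptional branch ever fires: the meet is simply the factor of smaller index, and the remaining steps reproduce the Modus Tollens part of Theorem 3.2, giving $v_{n1}$ for $k\geq l$, $v_{(n-l+k)1}$ for $2l\leq n+k$, and $v_{l1}$ for $2l\geq n+k$. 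This explains structurally why the Modus Tollens column carries no special cases in $i$.

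The main obstacle is the bookkeeping in the exceptional Modus Ponens case $(l-k)=i$: one must verify carefully that feeding the non-comparable value $v_{(n-i)1}$ into the meet selects the ``$v_{(i+1)0}$'' clause over the correct range of $k$ (using item 7's condition $k=n-i,\,l=i$ together with item 8's condition $k=n-i$ in Theorem 2.4), and then confirm that the subsequent implication reproduces the thresholds $2k>l+1$ and $2k\leq l+1$. Everything outside this single exceptional meet reduces to the case analysis already carried out in the LIA setting of Theorem 3.2, so once that meet is handled the remaining verifications are routine.
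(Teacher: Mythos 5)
Your proposal is correct and follows essentially the same route as the paper's proof: an inside-out evaluation of $e(P\rightarrow Q)$, then the meet, then the final implication using Definition 2.2 and Theorem 2.4, with the case split $k\geq l$, $k<l$ with $(l-k)\neq i$, and the exceptional $(l-k)=i$ case for Modus Ponens, and the observation that Modus Tollens never meets the non-comparable pair because all intermediate values lie in $V_1$. The only divergence is bookkeeping: you place the exceptional meet's boundary at $k\leq i$ while the paper uses $2k\leq l+1$ (i.e.\ $k\leq i+1$), and you correctly note that at $k=i+1$ both branches yield $v_{(n-i)1}$, so the theorem's stated case split is recovered either way.
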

\begin{proof}The proof for the first case i.e for $k\geq l$ in case1a,and case 2a is similar to the case-1a and 2a of  theorem 4.1. We will check for the case when $k<l$.\\
Case 1b: $k< l,~(l-k)\neq i $\\
$e(P\rightarrow Q)= v_{(n-l+k)1}$.\\
$e(P \wedge (P\rightarrow Q))= 
e(P) \wedge e(P\rightarrow Q)= \begin{cases}
v_{k0} &\text{if}\   l\leq 2k\\ 
v_{(l-k)0} &\text{if} \  2k\leq l\\
\end{cases}$\\
and $e(P \wedge(P\rightarrow Q)\rightarrow Q)= e(P\wedge(P\rightarrow Q))\rightarrow e(Q)= \begin{cases}

v_{(n-l+k)1} &\text{if}\   l\leq 2k\\ 
v_{(n-k)1} &\text{if} \  l\geq 2k\\
\end{cases}$\\
Case 1c:\\ If $k< l,~ (l-k)= i $\\
$e(P \wedge (P\rightarrow Q))= e(P)\wedge e(P\rightarrow Q)= \begin{cases}
v_{k0} &\text{if}\ 2k> (l+1)\\ 
v_{(l-k+1)0} &\text{if} \ 2k\leq (l+1)\\
\end{cases}$\\
$e(P \wedge (P\rightarrow Q)\rightarrow Q)= e(P \wedge (P\rightarrow Q))\rightarrow e(Q)= \begin{cases}
v_{(n-l+k)1} &\text{if}\  2k> (l+1)\\ 
v_{(n-k+1)1} &\text{if} \ 2k \leq (l+1)\\
\end{cases}$\\
Case 2a:\\If $k< l $\\
 $e(\neg Q \wedge (P\rightarrow Q))= 
e(\neg Q)\wedge e(P\rightarrow Q)= \begin{cases}
v_{l1} &\text{if}\  2l\leq (n+k)\\ 
v_{(n-l+k)1} &\text{if} \ 2l\geq (n+k)\\
\end{cases}$\\
Thus, $e(\neg Q \wedge(P\rightarrow Q)\rightarrow Q)= e(\neg Q \wedge(P\rightarrow Q))\rightarrow e(\neg P)= 
\begin{cases}
v_{(n-l+k)1} &\text{if}\  2l\leq (n+k)\\ 
v_{l1} &\text{if} \ 2l\geq (n+k)\\
\end{cases}$\\
\end{proof}
\begin{remark}
If the truth evaluation of the propositions $P$ and $Q$ is restricted to $V_0  = \{v_{00},v_{10},v_{20}..., v_{n0}\}$ and $e(P)$ is greater than or equal to $e(Q)$ then both Modus Ponens and Modus Tollens rules are
absolutely true.\\
If $e(P) = e(Q) = v_{00}$ then $P$ and $Q$ are classical propositions and Modus Ponens and Modus
Tollens rules are absolutely true in such cases.\\
However, if  $e(P)$ is less than or equal to $e(Q)$ then both Modus Ponens and Modus Tollens rules have graded truth values.
\end{remark}
\begin{theorem}If $P,Q \in \mathscr{Q}$ and $e(P)=v_{k1}; e(Q)= v_{l0}$, then the truth values of the Modus Ponens and Modus Tollens are as  follows:
\begin{enumerate}
 \item $e\{(P\bigwedge(P\rightarrow Q))\rightarrow Q\}= 
 \begin{cases}
 v_{n1},&\text{if}\ ((k+l) \leq n ~and ~k \neq (n-i))\\ 
v_{n1},&\text{if}\ ((k+l) \leq n ~and ~k = (n-i) \ and \ l \leq (i+1))\\
v_{n-l+i+1},&\text{if}\ ((k+l) \leq n ~and ~k = (n-i) \ and \ l \geq (i+1))\\
v_{k1}, &\text{if}\ ((k+l) >n ~ and ~k \neq (n-i) \ and \ n \leq (k+\dfrac{l}{2}))\\
v_{(2n-k-l)1}, &\text{if}\ ((k+l) >n ~and ~k \neq (n-i) \ and \ n\geq (k+\dfrac{l}{2}))\\ 
 v_{n1},&\text{if}\ ((k+l) =(n+1) ~ and ~k = (n-i))\\
 v_{(2n-k-l+1)1},&\text{if}\ ((k+l) > (n+1) ~ and ~k = (n-i) \ and \ (n-k) \geq \dfrac{l-1}{2})\\
v_{k1}, &\text{if}\ ((k+l) >n ~and~k = (n-i) \ and \ (n-k) \leq \dfrac{l-1}{2})
\end{cases}$\\
\item  $e\{(\neg Q)\bigwedge(P\rightarrow Q) \rightarrow(\neg P)\}= 
 \begin{cases}
 v_{n1},\ if \ ((k+l) \leq n ~and ~l \neq (n-i))\\ 
v_{n1},\ if \  ((k+l) \leq n ~and ~l = (n-i)~and~k \leq (n-l+1)\\
v_{(2n-k-l+1)1}, \ if \ (k+l) \leq n ~and ~l = (n-i)~and~k\geq (n-l+1)\\
v_{l1},~if~((k+l) \leq n ~and ~l \neq (n-i)~and~n\leq (l+\dfrac{k}{2})\\
v_{(2n-k-l)1}, ~if~((k+l) \leq n ~and ~l \neq (n-i)~and~n\geq (l+\dfrac{k}{2}) \\
v_{l1},~if~(k+l)>n, ~l = (n-i), ~and~n\leq (l+\dfrac{k-1}{2})\\
v_{(2n-k-l+1)1}, ~if~(k+l)>(n+1), \\~and~l = (n-i)~and~n\geq (l+\dfrac{k-1}{2}) \\
v_{n1}, ~if~(k+l)=(n+1), ~and ~\\ l = (n-i)~and~n\geq (l+\dfrac{k-1}{2}) \\
v_{(2n-k-l)1}, ~if~((k+l) \geq n ~and ~l \neq (n-i)~and~n\geq (l+\dfrac{k}{2}) \\
\end{cases}$\\
\end{enumerate}

\end{theorem}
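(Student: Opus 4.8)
The plan is to proceed exactly as in Theorem 3.3, evaluating the compound formulae step by step with the implication table of Definition 2.2 and the meet/join operations of Theorem 2.4, while now tracking the extra branching forced by the single non-comparable pair $v_{(n-i)1},\,v_{i0}$ of Fig.\ 2. The first step is to compute $e(P\rightarrow Q)=e(P)\rightarrow e(Q)=v_{k1}\rightarrow v_{l0}=v_{\max(0,\,k+l-n)0}$, which immediately splits the analysis into the subcase $(k+l)\le n$, giving $v_{00}$, and $(k+l)\ge n$, giving $v_{(k+l-n)0}$.

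For Modus Ponens I would next form $e(P\wedge(P\rightarrow Q))=v_{k1}\wedge v_{\max(0,\,k+l-n)0}$. This is the crux of the argument: it is a meet of a ``true'' value with a ``false'' value, so items $7$ and $8$ of Theorem 2.4 apply, and these are precisely the clauses that behave abnormally at the distinguished level. Writing $m=\max(0,k+l-n)$, the comparison $n\le k+m$ versus $n\ge k+m$ reduces to the threshold $n\le k+\tfrac{l}{2}$ (equivalently $n-k\ge\tfrac{l-1}{2}$ in the shifted branch), and this is exactly the inequality appearing in the statement. The computation then bifurcates according to whether $k\neq(n-i)$, where the generic meet $v_{m0}$ or $v_{(n-k)0}$ is returned, or $k=(n-i)$, where item $7$/$8$ instead returns the shifted value $v_{(i+1)0}$. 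Since the meet always lands in $V_0$ and $e(Q)=v_{l0}\in V_0$, the outer implication is computed by the fourth rule of Definition 2.2, $v_{a0}\rightarrow v_{l0}=v_{\min(n,\,n-l+a)1}$; substituting the various values of $a$ obtained above yields in turn $v_{n1}$, $v_{k1}$, $v_{(2n-k-l)1}$, $v_{(2n-k-l+1)1}$, and $v_{(n-l+i+1)1}$, matching the eight listed clauses once the $\min$ with $n$ is resolved using the sign of $k+l-n$.

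For Modus Tollens the procedure is identical after replacing $P,Q$ by $\neg Q,\neg P$. Here $e(\neg Q)=v_{l1}$ and $e(\neg P)=v_{k0}$, so the conjunction $v_{l1}\wedge v_{\max(0,\,k+l-n)0}$ triggers the abnormal subcases of items $7$ and $8$ when $l=(n-i)$ rather than $k=(n-i)$, the governing threshold becoming $n\le l+\tfrac{k}{2}$ (respectively $n\le l+\tfrac{k-1}{2}$ in the shifted branch). Feeding the resulting value $v_{b0}\in V_0$ into the outer implication $v_{b0}\rightarrow v_{k0}=v_{\min(n,\,n-k+b)1}$ produces the symmetric list $v_{n1}$, $v_{l1}$, $v_{(2n-k-l)1}$, $v_{(2n-k-l+1)1}$.

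The main obstacle I anticipate is purely combinatorial bookkeeping rather than any conceptual difficulty: one must simultaneously track the sign of $k+l-n$, whether the distinguished index $n-i$ coincides with $k$ (for Modus Ponens) or with $l$ (for Modus Tollens), and the two half-integer thresholds arising from the branches of items $7$ and $8$, then check that every combination collapses onto exactly one of the eight (respectively nine) listed values. The boundary cases, notably $k+l=n+1$ together with $k=n-i$ (or $l=n-i$), are where the shifted and generic clauses meet and where assignment errors are most likely; I would verify each of these against the five-element lattice of Fig.\ 3 as a consistency check before finalising the case split.
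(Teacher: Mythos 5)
Your plan is correct and follows essentially the same route as the paper's own proof: compute $e(P\rightarrow Q)=v_{\max(0,k+l-n)0}$, take the meet via items 7--8 of Theorem 2.4 with the shifted clause triggered at $k=(n-i)$ (resp.\ $l=(n-i)$ for Modus Tollens), and feed the resulting $V_0$-value into $v_{a0}\rightarrow v_{l0}=v_{\min(n,n-l+a)1}$, with exactly the thresholds $n\lessgtr k+\frac{l}{2}$, $n-k\gtrless\frac{l-1}{2}$ and the boundary case $k+l=n+1$ that the paper uses. The only cosmetic slip is citing Fig.~3 instead of Fig.~4 for the consistency check on the five-element quasi-lattice.
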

\begin{proof} 
Case 1a: Let $(k+l)\leq n,~ k \neq (n-i)$\\
$ e(P\rightarrow Q)= v_{00}$.\\
So, $e(P\wedge (P\rightarrow Q))= 
e(P)\wedge e(P\rightarrow Q)= v_{(n-k)0}$\\
Thus, $e(P\wedge(P\rightarrow Q)\rightarrow Q)= e(P\wedge(P\rightarrow Q))\rightarrow e(Q)= v_{n1}$\\
Case 1b: Let $(k+l)\leq n,~ k = (n-i),l= (n-i)$\\
$ e(P\rightarrow Q)= v_{00}$.\\
So, $e(P\wedge (P\rightarrow Q))= 
e(P)\wedge e(P\rightarrow Q)= v_{(i+1)0}$\\
Thus, $e(P\wedge(P\rightarrow Q)\rightarrow Q)= e(P\wedge(P\rightarrow Q))\rightarrow e(Q)= \begin{cases}v_{n1},\ if \ l\leq (i+1)\\
v_{(n-l+i+1)1}, \ if \ l\geq (i+1)
\end{cases}$\\
Case1c: Let $(k+l)>n, ~k \neq (n-i)$\\
$e(P\rightarrow Q)= v_{(k+l-n)0} $.\\
So, $e(P\wedge (P\rightarrow Q))= 
e(P)\wedge e(P\rightarrow Q)= 
\begin{cases}
v_{(k+l-n)0}, ~if~n\leq (k+\dfrac{l}{2})\\
v_{(n-k)0}, ~if~n\geq (k+\dfrac{l}{2}) 
\end{cases}$\\
Thus, $e(P\wedge(P\rightarrow Q)\rightarrow Q)= e(P\wedge(P\rightarrow Q))\rightarrow e(Q)= 
\begin{cases}
v_{k1},~if~n\leq (k+\dfrac{l}{2})\\
v_{(2n-k-l)1},~if~n\geq (k+\dfrac{l}{2}) 
\end{cases}$\\ 
Case1d:Let $(k+l)>n, ~k = (n-i)$\\
$e(P\rightarrow Q)= v_{(k+l-n)0} $.\\
So, $e(P\wedge (P\rightarrow Q))= 
e(P)\wedge e(P\rightarrow Q)= 
\begin{cases}
v_{(i+1)0}, ~if ~n\geq (k+\dfrac{l-1}{2})\\
v_{(l-i)0}, ~if~n\leq (k+\dfrac{l-1}{2}) 
\end{cases}$\\
Thus, $e(P\wedge(P\rightarrow Q)\rightarrow Q)= \\e(P\wedge(P\rightarrow Q))\rightarrow e(Q)= 
\begin{cases}
v_{n1}, ~if~ n\geq (k+\dfrac{l-1}{2})~and~(k+l)=(n+1)\\
v_{(2n-k-l+1)1},~if~n\geq (k+\dfrac{l-1}{2})~and~(k+l)>(n+1))\\
v_{k1},~if~n\leq (k+\dfrac{l-1}{2}) 
\end{cases}$\\ 
Case2a:Let $(k+l)\leq n, l\neq (n-i)$\\
$e(\neg Q\wedge (P\rightarrow Q))= 
e(\neg Q)\wedge e(P\rightarrow Q)= v_{(n-l)0}$\\
and $e(\neg Q\wedge(P\rightarrow Q)\rightarrow \neg P)= e(\neg Q\wedge(P\rightarrow Q))\rightarrow e(\neg P)= v_{n1}$\\
Case 2b: Let $(k+l)\leq n,~l= (n-i)$\\
and $e(\neg Q\wedge (P\rightarrow Q))= 
e(\neg Q)\wedge e(P\rightarrow Q)= v_{(i+1)0}$\\
and $e(\neg Q\wedge(P\rightarrow Q)\rightarrow \neg P)= e(\neg Q\wedge(P\rightarrow Q))\rightarrow e(\neg P)=
\begin{cases} 
v_{n1},\ if \ k \leq (n-l+1)\\
v_{(2n-k-l+1)1}, \ if \ k\geq (n-l+1)
\end{cases}$\\
Case 2c: Let $(k+l)>n, ~l \neq (n-i)$\\
 $e(\neg Q\wedge (P\rightarrow Q))= 
e(\neg Q)\wedge e(P\rightarrow Q)= 
\begin{cases}
v_{(k+l-n)0},~if~n\leq (l+\dfrac{k}{2})\\
v_{(n-l)0}, ~if~n\geq (l+\dfrac{k}{2}) 
\end{cases}$\\ 
and $e(\neg Q\wedge(P\rightarrow Q)\rightarrow \neg P)=e(\neg Q\wedge(P\rightarrow Q))\rightarrow e(\neg P)= 
\begin{cases}
v_{l1},~if~n\leq (l+\dfrac{k}{2})\\
v_{(2n-k-l)1}, ~if~n\geq (l+\dfrac{k}{2}) 
\end{cases}$\\ 
Case 2d: Let $(k+l)>n, ~l = (n-i)$\\
and $e(\neg Q\wedge (P\rightarrow Q))= 
e(\neg Q)\wedge e(P\rightarrow Q)= 
\begin{cases}
v_{(k+l-n)0},~if~(k+l)>n, ~l = (n-i)~n\leq (l+\dfrac{k-1}{2})\\
v_{(n-l+1)0}, ~if~(k+l)>n, ~l = (n-i)~n\geq (l+\dfrac{k-1}{2}) 
\end{cases}$\\ 
$e(\neg Q\wedge(P\rightarrow Q)\rightarrow \neg P)=e(\neg Q\wedge(P\rightarrow Q))\rightarrow e(\neg P)= 
\begin{cases}
v_{l1},~if~(k+l)>n, ~and~l = (n-i),\\and~n\leq (l+\dfrac{k-1}{2})\\
v_{(2n-k-l+1)1}, ~if~(k+l)>(n+1), \\ and~l = (n-i)~and~n\geq (l+\dfrac{k-1}{2}) \\
v_{n1}, ~if~(k+l)=(n+1), \\and~l = (n-i)~and~n\geq (l+\dfrac{k-1}{2}) 
\end{cases}$\\ 
Case2e: Let $(k+l)>n, ~l \neq (n-i)$\\
and $e(\neg Q\wedge (P\rightarrow Q))=\\ 
e(\neg Q)\wedge e(P\rightarrow Q)= 
\begin{cases}
v_{(n-l)0}, ~if~(k+l)>n, ~l \neq (n-i)~n\geq (l+\dfrac{k}{2})\\
v_{(k+l-n)0},~if~(k+l)>n, ~l \neq (n-i)~n\leq (l+\dfrac{k}{2})\\
\end{cases}$\\
$e(\neg Q\wedge(P\rightarrow Q)\rightarrow \neg P)=\\e(\neg Q\wedge(P\rightarrow Q))\rightarrow e(\neg P)= 
\begin{cases}
v_{l1},~if~(k+l)>n, ~and~l \neq (n-i),\\and~n\leq (l+\dfrac{k}{2})\\
v_{(2n-k-l)1}, ~if~(k+l)>(n), ~and~l \neq (n-i)~and~n\geq (l+\dfrac{k}{2}) \\
v_{n1}, ~if~(k+l)=(n+1), ~and~l = (n-i)~and~n\geq (l+\dfrac{k-1}{2}) 
\end{cases}$\\ 
\end{proof}
\begin{remark}If $P,Q \in \mathscr{Q}$ and $e(P)=v_{k1}, e(Q)= v_{l0}$, then Modus Ponens and Modus Tollens have graded truth values.
\end{remark}
\begin{theorem}If $P,Q \in \mathscr{Q}$ and $e(P)=v_{k0}; e(Q)= v_{l1}$, then the truth values of the Modus Ponens and Modus Tollens are as  follows:
\begin{enumerate}
 \item $e\{(P\bigwedge(P\rightarrow Q))\rightarrow Q\}= 
 \begin{cases}
 v_{n1},\ if \ (k+l) \geq n\\ 
v_{(k+l)1},\ if \ ((k+l) \leq n ~and ~(k+l) \neq (n-i) \ and \ n \leq (2k+l))\\
v_{(n-k)1}, ~if~((k+l) \leq n ~ and ~(k+l) \neq (n-i) \ and \ n \geq (2k+l))\\
v_{(k+l)1},~if~((k+l) \leq n ~ and ~(k+l) = (n-i) \ and\  n \leq (2k+l))\\
v_{n1},~if~((k+l) \leq n ~ and ~(k+l) = (n-i) \ and\ n \geq (2k+l)~and~0 \leq k \leq1)\\
v_{(n-k+1)1},~if~((k+l) \leq n ~ and ~(k+l) \neq (n-i) \ and\  n \geq (2k+l))
\end{cases}$ \\
\item  $e\{(\neg Q)\bigwedge(P\rightarrow Q) \rightarrow(\neg P)\}= 
 \begin{cases}
 v_{n1},\ if \ (k+l) \geq n\\ 
v_{(k+l)1},~if~((k+l) \leq n ~and ~l \neq (n-i)~and~ n \leq (2l+k))\\
v_{(n-l)1},~if~((k+l) \leq n ~and ~l \neq (n-i)~and~ n \geq (2l+k))\\
v_{(k+l)1},~if~((k+l) \leq n ~and ~l = (n-i)~and~\ if \ n \leq (2l+k-1))\\
v_{n1},~if~((k+l) \leq n ~and ~l = (n-i)~and~ n \geq (2l+k-1)\\~and~ 0 \leq l \leq1)\\
v_{(n-l+1)1},~if~((k+l) \leq n ~and ~l = (n-i)~and~\ if \ n \geq (2l+k-1)\\ ~and~  1 <l \leq \dfrac{(n-k+1)}{2})\\
\end{cases}$\\
\end{enumerate}
\end{theorem}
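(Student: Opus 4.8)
The plan is to mirror the three-stage evaluation already used in Theorems 4.1--4.3: first evaluate the conditional $P\rightarrow Q$ by Definition 2.2, then the conjunction by the meet rules of Theorem 2.4, and finally the outermost implication again by Definition 2.2. Since $e(P)=v_{k0}$ and $e(Q)=v_{l1}$, the rule $v_{i0}\rightarrow v_{j1}=v_{\min(n,i+j)1}$ of Definition 2.2 gives $e(P\rightarrow Q)=v_{\min(n,k+l)1}$, so the natural first split is on whether $k+l\geq n$ or $k+l\leq n$. In the former case $e(P\rightarrow Q)=v_{n1}=I$, which trivialises both conjunctions (meet with $I$ fixes the other argument), and both rules return $v_{n1}$.

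For Modus Ponens with $k+l\leq n$ we have $e(P\rightarrow Q)=v_{(k+l)1}$, and the work is to evaluate the meet $v_{k0}\wedge v_{(k+l)1}$ of Theorem 2.4: here the true-value index is $k+l$ and the false-value index is $k$, with sum $2k+l$, so meet-rules 7--8 give $v_{k0}$ when $n\leq 2k+l$ and $v_{(n-k-l)0}$ when $n\geq 2k+l$. Feeding these back through $\rightarrow v_{l1}$ yields $v_{(k+l)1}$ and $v_{(n-k)1}$ respectively. The remaining branches are the special ones where the intermediate true index hits the non-comparable element, i.e. $k+l=n-i$: there meet-rules 7--8 are corrected to $v_{(i+1)0}$, and since $i+1+l=n-k+1$ the final implication $\rightarrow v_{l1}$ produces $v_{(n-k+1)1}$, which the $\min(n,\cdot)$ in Definition 2.2 saturates to $v_{n1}$ exactly when $k\in\{0,1\}$.

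For Modus Tollens the computation is structurally identical after replacing $e(P),e(Q)$ by $e(\neg P)=v_{k1}$ and $e(\neg Q)=v_{l0}$ via the involution of Definition 2.2. For $k+l\leq n$ I would evaluate $v_{l0}\wedge v_{(k+l)1}$ (true index $k+l$, false index $l$, sum $k+2l$), obtaining $v_{l0}$ or $v_{(n-k-l)0}$ generically and hence $v_{(k+l)1}$, $v_{(n-l)1}$ after closing with $\rightarrow v_{k1}$. Now the special branch is triggered by the involution image $v_{(n-i)0}$ of the non-comparable element, i.e. at $l=n-i$; the correction shifts both the threshold (to $2l+k-1$) and the answer, giving $v_{(n-l+1)1}$ and, for $l\in\{0,1\}$, $v_{n1}$.

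The main obstacle is purely organisational: deciding, for each admissible pair $(k,l)$, which branch of meet-rules 7--8 of Theorem 2.4 is active, whether the relevant intermediate index meets the non-comparable element $v_{(n-i)1}$ (Modus Ponens, via the true slot) or its involution image $v_{(n-i)0}$ (Modus Tollens, via the false slot), and tracking the low thresholds $k\in\{0,1\}$ and $l\in\{0,1\}$ at which the $\min(n,\cdot)$ of Definition 2.2 forces the value up to $v_{n1}$. Every individual step is a direct substitution into Definition 2.2 and Theorem 2.4; the only genuine care needed is to verify that the listed sub-cases are mutually exclusive and together exhaust all $(k,l)$ with $e(P)=v_{k0}$ and $e(Q)=v_{l1}$.
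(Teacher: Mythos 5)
Your overall route is the paper's: evaluate $e(P\rightarrow Q)=v_{\min(n,k+l)1}$, split on $k+l\geq n$ versus $k+l\leq n$, compute the conjunction by meet rules 7--8 of Theorem 2.4 (with the correction for the non-comparable element), and close with the outer implication of Definition 2.2. Your Modus Ponens analysis is correct and matches the paper's Cases 1a--1c: generic branch with threshold $2k+l$, special branch at $k+l=n-i$ where the meet becomes $v_{(i+1)0}=v_{(n-k-l+1)0}$, giving $v_{\min(n,n-k+1)1}$ and saturation to $v_{n1}$ exactly for $k\in\{0,1\}$.

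The gap is in Modus Tollens. You place the special branch at $l=n-i$, justified by ``the involution image $v_{(n-i)0}$ of the non-comparable element''. But $v_{(n-i)0}$ is not a special element of the lattice of Fig.\ 2: the non-comparable pair is $\{v_{i0},v_{(n-i)1}\}$, and the corrections in meet rules 7--8 of Theorem 2.4 are triggered by the \emph{true} argument being $v_{(n-i)1}$ (or the exact pair $(v_{(n-i)1},v_{i0})$), never by the false argument being $v_{(n-i)0}$. Since the meet you must evaluate is $v_{l0}\wedge v_{(k+l)1}$, its true argument is again $e(P\rightarrow Q)=v_{(k+l)1}$, so the special branch is triggered by $k+l=n-i$, exactly as in your Modus Ponens analysis and as in the paper's own proof (its Cases 2b--2c split on $(k+l)\neq(n-i)$ versus $(k+l)=(n-i)$; the condition ``$l=(n-i)$'' in the printed statement of part 2 is a slip carried over from the pattern of Theorem 4.3, where the true argument of the meet really is $e(\neg Q)=v_{l1}$). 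Your version is also internally inconsistent: with your trigger $l=n-i$ the corrected meet would be $v_{(i+1)0}=v_{(n-l+1)0}$, and closing with $\rightarrow v_{k1}$ would give $v_{\min(n,\,n-l+k+1)1}$, not the values $v_{(n-l+1)1}$ and ``$v_{n1}$ for $l\in\{0,1\}$'' that you assert; those values, and the shifted threshold $2l+k-1$, are precisely what the trigger $k+l=n-i$ produces (meet $v_{(n-k-l+1)0}$, then $v_{(n-k-l+1)0}\rightarrow v_{k1}=v_{\min(n,\,n-l+1)1}$). So the Modus Tollens case analysis as you describe it would not go through; replacing the trigger $l=n-i$ by $k+l=n-i$ makes the computation close as in the paper.
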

\begin{proof} Case 1a: Let $(k+l)\geq n$\\
$ e(P\rightarrow Q)= v_{n1}$.\\
So, $e(P\wedge (P\rightarrow Q))= 
e(P)\wedge e(P\rightarrow Q)= v_{k0}$\\
$e(P\wedge(P\rightarrow Q)\rightarrow Q)= e(P\wedge(P\rightarrow Q))\rightarrow e(Q)= v_{n1}$\\
Case1b: Let $(k+l) \leq n,\ (k+l) \neq (n-i)$\\
$e(P\rightarrow Q)= v_{(k+l)1}$.\\
So, $e(P\wedge (P\rightarrow Q))= 
e(P)\wedge e(P\rightarrow Q)= 
\begin{cases}
v_{k0},\ if \ n \leq (2k+l)\\
v_{(n-k-l)0},\ if \ n \geq (2k+l)
\end{cases}$\\
 $e(P\wedge(P\rightarrow Q)\rightarrow Q)= e(P\wedge(P\rightarrow Q))\rightarrow e(Q)= 
\begin{cases}
v_{(k+l)1},\ if \ n \leq (2k+l)\\
v_{(n-k)1},\ if \ n \geq (2k+l)
\end{cases}$ \\
Case1c: Let $(k+l) \leq n,\ (k+l) = (n-i)$\\
$e(P\rightarrow Q)= v_{(k+l)1}$.\\
So, $e(P\wedge (P\rightarrow Q))= 
e(P)\wedge e(P\rightarrow Q)= 
\begin{cases}
v_{k0},\ if \ n \leq (2k+l)\\
v_{(n-k-l+1)0},\ if \ n \geq (2k+l)
\end{cases}$\\
 $e(P\wedge(P\rightarrow Q)\rightarrow Q)= e(P\wedge(P\rightarrow Q))\rightarrow e(Q)= 
\begin{cases}
v_{(k+l)1},\ if \ n \leq (2k+l)\\
v_{n1},\ if \ n \geq (2k+l)~and~0 \leq k \leq1\\
v_{(n-k+1)1},\ if \ n \geq (2k+l)
\end{cases}$ \\
Case2a: Let $(k+l)\geq n$\\
$e(\neg Q\wedge (P\rightarrow Q))= 
e(\neg Q)\wedge e(P\rightarrow Q)= v_{l0}$\\
$e(\neg Q\wedge(P\rightarrow Q)\rightarrow \neg P)= e(\neg Q\wedge(P\rightarrow Q))\rightarrow e(\neg P)= v_{n1}$.\\
Case2b: Let $(k+l) \leq n,\ (k+l) \neq (n-i)$\\
$e(P\rightarrow Q)= v_{(k+l)1}$.\\
$e(\neg Q \wedge (P\rightarrow Q))= 
e(\neg Q)\wedge e(P\rightarrow Q)= 
\begin{cases}
v_{l0},\ if \ n \leq (2l+k)\\
v_{(n-k-l)0},\ if \ n \geq (2l+k)
\end{cases}$\\
$e(\neg Q\wedge(P\rightarrow Q)\rightarrow \neg P)=
 e(\neg Q\wedge(P\rightarrow Q))\rightarrow e(\neg P)= 
\begin{cases}
v_{(k+l)1},\ if \ n \leq (2l+k)\\
v_{(n-l)1},\ if \ n \geq (2l+k)
\end{cases}$ \\
Case 2c: Let $(k+l)<n,~(k+l)= (n-i)$\\
$e(P\rightarrow Q)= v_{(k+l)1}$.\\
$e(\neg Q\wedge (P\rightarrow Q))= 
e(\neg Q)\wedge e(P\rightarrow Q)= 
\begin{cases}
v_{l0},\ if \ n \leq (2l+k-1)\\
v_{(n-k-l+1)0},\ if \ n \geq (2l+k-1)
\end{cases}$\\\\
$e(\neg Q\wedge(P\rightarrow Q)\rightarrow \neg P)=\\ e(\neg Q\wedge(P\rightarrow Q))\rightarrow e(\neg P)= \begin{cases}
v_{(k+l)1},\ if \ n \leq (2l+k-1)\\
v_{n1},\ if \ n \geq (2l+k-1)~and~ 0 \leq l \leq1\\
v_{(n-l+1)1},\ if \ n \geq (2l+k-1) ~and~  1 <l \leq (n-k+1)/2\\
\end{cases} $
\end{proof}
\begin{remark}If $P,Q \in \mathscr{Q}$ and $e(P)=v_{k0}, e(Q)= v_{l1}$, then Modus Ponens and Modus Tollens have graded truth values.
 \end{remark}
We consider a set of QLTVP where the linguistic truth values of the propositions are represented  by the lattice of the figure (Fig 4) below. In the following examples we have computed the truth values of the inference rules (ModusPonens and Modus Tollens) for a few particular cases.
\begin{figure}[h]
\centering
\begin{minipage}[b]{3in}
\hfill
  \includegraphics[width=3.0in, height=3.0in]{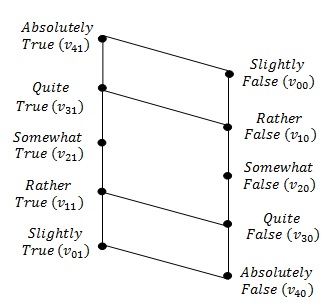}
\caption{Hasse Diagram of Quasi-Linguistic Truth-Value set V having five values}
\end{minipage}
 \end{figure} 
\begin{example}Let $P,Q \in \mathscr{Q}$ and $e(P)= Quite \ True$; $e(Q)=  Rather \ True$. Then  Modus Ponens rule is Quite True and Modus Tollens rules is Absolutely True.\\
If $P,Q \in \mathscr{Q}$ and $e(P)$= Quite true= $v_{31}; e(Q$) =  Rather True =$ v_{11}$ .\\
Here we have taken $e(P)> e(Q)$ i.e. the case when $k>l$.\\
$e(P\rightarrow Q)=v_{21}$\\
Then $e(P\wedge (P\rightarrow Q))= v_{21}$
and $e(\neg Q)\wedge e(P\rightarrow Q)=v_{30}$\\ 
Therefore, $e(P\wedge(P\rightarrow Q)\rightarrow Q)= v_{31}$=Quite  true\\
and $e(\neg Q\wedge(P\rightarrow Q)\rightarrow \neg P)=v_{41}$= Absolutely True.\\
Thus we can see that the inference rules may be not absolutely true but have some truth values close to absolutely true.
\end{example}
\begin{example}
If $P,Q \in \mathscr{Q}$ and $e(P)= v_{10}$= Rather \ False; $e(Q)=  v_{20}$=Somewhat \ False, then $e\{(P\wedge(P\rightarrow Q))\rightarrow Q\}= v_{31}$ and $e\{(\neg Q)\wedge(P\rightarrow Q) \rightarrow(\neg P)\}= v_{31}$ .\\
Here we have taken $e(P)< e(Q)$ i.e. the case when $k<l$.\\
$e(P\rightarrow Q)=v_{31}$\\
Then $e(P\wedge (P\rightarrow Q))= v_{10}$
and $e(\neg Q)\wedge e(P\rightarrow Q)= v_{21}$\\
Therefore, $e(P\wedge(P\rightarrow Q)\rightarrow Q)= v_{31}$\\
and $e(\neg Q\wedge(P\rightarrow Q)\rightarrow \neg P)= v_{31}$\\
Thus we can see that both the Modus Ponens and Modus Tollens rule are Quite True.
\end{example}
\begin{example}
If $P,Q \in \mathscr{Q}$ and $e(P)= v_{21}$= Somewhat \ True; $e(Q)=  v_{30}$=Slightly \ False, then Modus Ponens rule is Absolutely True, and Modus Tollens rule is Quite True.\\
Here we have taken the case when $(k+l)>n$ and also $k=(n-i), ~and~ l \neq (n-i)$.\\
$e(P\rightarrow Q)= v_{10}$
Then $e(P\wedge (P\rightarrow Q))= v_{30}$\\
Therefore, $e(P\wedge(P\rightarrow Q)\rightarrow Q)= v_{41}$\\
Here,  $e(\neg Q)\wedge e(P\rightarrow Q)= v_{10}$.\\ $e(\neg Q\wedge(P\rightarrow Q)\rightarrow \neg P)= v_{31}$\\
\end{example}
\begin{example}
If $P,Q \in \mathscr{Q}$ and $e(P)= v_{00}$; $e(Q)=  v_{31}$, then Modus Ponens rule is Absolutely True, and Modus Tollens rule is Quite True.\\
Here we have taken the case when $(k+l)<n, (k+l) \neq (n-i), n>(2k+l)$.\\
$e(P\rightarrow Q)= v_{31}$\\
Then $e(P\wedge (P\rightarrow Q))= v_{10}$. Therefore, $e(P\wedge(P\rightarrow Q)\rightarrow Q)= v_{41}$\\
and $e(\neg Q)\wedge e(P\rightarrow Q)= v_{30}$ and $e(\neg Q\wedge(P\rightarrow Q)\rightarrow \neg P)=v_{31}$\\
\end{example}
\section{Conclusion}In this paper we have computed truth values of Modus Ponens and Modus Tollens rule for linguistic truth valued propositions. This may be extended for other inference rules also. Here we have enlisted truth values of Modus Ponens and Modus Tollens rule for  propositions having linguistic truth values  that may be represented by two types of lattices (Fig.1 and Fig.2). The same method may be extended for other types of lattices where the pattern of non comparable elements are different.\\\\
\textbf{Acknowledgement}\\ We are indebted to Prof. M.K.Chakraborty  for his valuable suggestions and inspiration in the preparation of this paper.

\Addresses
\end{document}